\date{}
\definecolor{fresh}{HTML}{1e5e06}
\definecolor{checked}{HTML}{02087F}
\definecolor{final}{HTML}{A66911}
\definecolor{wrong}{HTML}{c90000}
\definecolor{skip}{HTML}{ffffff}
\definecolor{normal}{HTML}{000000}
\definecolor{ruby}{RGB}{190,0,1}
\definecolor{emerald}{RGB}{26,121,42}
\definecolor{topaz}{RGB}{236,185,57}
\definecolor{sapphire}{RGB}{14,26,164}
\definecolor{blue}{RGB}{14,26,164}
\crefname{condition}{Condition}{Conditions}
\newcommand{\appref}[1]{\hyperref[#1]{Appendix~\ref*{#1}}}
\newtheorem{theorem}{Theorem}
\newtheorem{proposition}[theorem]{Proposition}
\newtheorem{lemma}[theorem]{Lemma}
\newtheorem{corollary}[theorem]{Corollary}
\newtheorem{conjecture}[theorem]{Conjecture}
\theoremstyle{definition}
\newtheorem{example}[theorem]{Example}
\newtheorem{remark}[theorem]{Remark}
\newtheorem*{ndefinition}{Definition}
\renewcommand{\epsilon}{\ensuremath\varepsilon}
\renewcommand{\Lambda}{\ensuremath\mu}
\renewcommand{\R}{\mathbb{R}}
\newcommand{\Z}{\mathbb{Z}}
\newcommand{\N}{\mathbb{N}}
\newcommand{\Q}{\mathbb{Q}}
\newcommand{\CC}{\mathbb{C}}
\newcommand{\FF}{\mathbb{F}}
\newcommand{\NN}{\mathbb{N}}
\newcommand{\QQ}{\mathbb{Q}}
\newcommand{\ZZ}{\mathbb{Z}}
\newcommand{\cD}{\mathcal{D}}
\renewcommand{\a}{\alpha}
\renewcommand{\b}{\beta}
\newcommand{\e}{\varepsilon}
\newcommand{\set}[2]{\left\{ #1 \ \middle| \ #2 \right\}}
\newcommand{\bra}[1]{\mleft( #1 \mright)}
\newcommand{\abs}[1]{\left|#1\right|}
\newcommand{\rep}{\mathrm{rep}_p}
\DeclareRobustCommand\bigop[1]{%
  \mathop{\vphantom{\sum}\mathpalette\bigop@{#1}}\slimits@
}
\newcommand{\bigop@}[2]{%
  \vcenter{%
    \sbox\z@{$#1\sum$}%
    \hbox{\resizebox{\ifx#1\displaystyle.9\fi\dimexpr\ht\z@+\dp\z@}{!}{$\m@th#2$}}%
  }%
}
\newcommand{\Eop}{\DOTSB\bigop{\mathbb{E}}}
\newcommand*\pFq[2]{{}_{#1}F_{#2}}
\DeclareMathOperator{\Gal}{Gal}
\DeclareMathOperator{\weil}{Weil}
\newcommand*\wc{{}\cdot{}}
\providecommand*{\eu}%
{\ensuremath{\mathrm{e}}}
\providecommand*{\iu}%
{\ensuremath{\mathrm{i}}}
\newcommand{\seq}[3][{}]{\langle #2 \rangle_{#3}^{#1}}
\title{On the growth of hypergeometric sequences}
\author[1]{George Kenison}%
\affil[1]{{Liverpool John Moores University}}
\author[2]{Jakub Konieczny}
\affil[2]{{University of Oxford}}
\author[2,3,4]{Florian Luca}
\affil[3]{{Stellenbosch University}}
\affil[4]{{Max Planck Institute for Software Systems}}
\author[5]{Andrew Scoones}
\affil[5]{{University Paris-Est Créteil}}
\author[6]{Mahsa Shirmohammadi}
\affil[6]{{CNRS, IRIF}}
\author[2]{James Worrell}
\begin{document}

\maketitle

\begin{abstract}
Hypergeometric sequences obey first-order linear recurrence relations with polynomial coefficients and are commonplace throughout the mathematical and computational sciences.
For certain classes of hypergeometric sequences, we prove linear growth estimates on their Weil heights.
We give an application of our effective results, towards the Membership Problem from Computer Science.  Recall that Membership asks to procedurally determine whether a specificed target is an element of a given recurrence sequence. 
\end{abstract}

\section{Introduction}\label{sec:intro}

In this work, we estimate the growths of Weil complexity of hypergeometric sequences.
Recall that a rational-valued sequence is \emph{hypergeometric} if its terms obey a first-order recurrence relation with polynomial coefficients.
Specifically, a rational-valued sequence \(\seq[\infty]{u_n}{n=0}\) is hypergeometric if its terms obey a recurrence relation of the form
\begin{equation}\label{eq:setup:def-u}
	f(n)u_{n} = g(n)u_{n-1},
\end{equation}
where $f(x),g(x) \in \QQ[x]$ are polynomials with rational coefficients and the initial value $u_0 \in \QQ$ is rational.
Here and throughout we shall assume that \(f(x)\) has no non-negative integer zeroes.
This setup and the assumption on \(f(x)\) means that the recurrence relation \eqref{eq:setup:def-u} uniquely defines an infinite sequence of rational numbers.

Arguably, the hypergeometric sequences constitute the simplest class of P-finite sequences.
Recall that a sequence is \emph{P-finite} (sometimes \emph{holonomic})
if its terms obey a linear recurrence relation with polynomial coefficients.
Hypergeometric sequences appear throughout the mathematical and computational sciences in relation to their generating functions. Indeed, these generating functions encompass the common trigonometric and hypergeometric functions and have numerous applications in analytic combinatorics and algebraic computation~\cite{flajolet2009analytic, kauers2011tetrahedron}.

\subsection*{Main Contributions}
 Given a hypergeometric sequence \(\seq[\infty]{u_n}{n=0}\) that obeys recurrence relation \eqref{eq:setup:def-u}, we call the roots of the polynomial coefficients \(f\) and \(g\) the \emph{parameters} of sequence \(\seq[\infty]{u_n}{n=0}\).
 We define classes \hyperlink{defin:C}{\(\mathscr{C}\)} and \hyperlink{defin:D}{\(\mathscr{D}\)} of hypergeometric sequences that make additional assumptions on the parameters as laid out below.
 Our main contributions are \cref{thm:main,thm:asymmetry}, which give linear growth estimates on the Weil heights of hypergeometric sequences in these classes.
 Our results generalise the following observation.
 When \(\seq[\infty]{u_n}{n=0}\) is a non-constant geometric sequence of the form \(u_n = \alpha^n\), we have that \(h_{\weil}(u_n) = n h_{\weil}(\alpha)\).
 We note that the growths rates of rational hypergeometric sequences (i.e., the class of \(\seq[\infty]{u_n}{n=0}\) for which \(u_n = q(n)\) with \(q\in\Z[n]\)) are given by the well-known estimate \(h_{\weil}(u_n) = \deg(q) \log(n) + O(1)\) (see \cref{prop:heights}).
 Thus we exclude the class of rational hypergeometric sequences in \hyperlink{defin:C}{\(\mathscr{C}\)} and \hyperlink{defin:D}{\(\mathscr{D}\)} below.

 Henceforth for functions \(a,b\colon \N_0 \to \R\), we shall employ the standard \emph{Vinogradov} and \emph{big-O} notations \(a(n)\gg b(n)\) and \(a(n) = O(b(n))\) to indicate that there exist constants \(N\in\N_0\) and \(C>0\) such that for all \(n\ge N_0\), \(|a(n)|\ge C |b(n)|\) and \(|a(n)|\le C|b(n)|\), respectively.

 \begin{ndefinition}[Class \(\mathscr{C}\)]
 \hypertarget{defin:C}
     Let \(\mathscr{C}\) denote the family of {non-rational} hypergeometric sequences whose parameters lie in $\QQ(\sqrt{\Delta_1}) \cup \QQ(\sqrt{\Delta_2}) \cup \QQ(\sqrt{\Delta_1\Delta_2})$ for some square-free $\Delta_1,\Delta_2, \Delta_1\Delta_2 \in \ZZ$.
 \end{ndefinition}

\begin{restatable}{theorem}{theoremmain}
\label{thm:main}
For hypergeometric sequences \(\seq[\infty]{u_n}{n=0}\) in \hyperlink{defin:C}{class~\(\mathscr{C}\)}, %
we have \(h_{\weil}(u_n) \gg n\).  Here the implied constant depends only on \(\seq[\infty]{u_n}{n=0}\).
\end{restatable}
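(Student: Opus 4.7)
Since $u_n\in\QQ$ one has $h_\weil(u_n) = \log\max(1,|u_n|) + \sum_p\max(0,-v_p(u_n))\log p$, so it suffices to exhibit a single place of $\QQ$ contributing $\gg n$. Factor $f = c_f\prod_i F_i^{e_i}$ and $g = c_g\prod_j G_j^{d_j}$ into monic $\QQ$-irreducibles; by the class~$\mathscr{C}$ hypothesis each $F_i,G_j$ has degree $1$ or $2$, and each quadratic factor has splitting field among $K_1 := \QQ(\sqrt{\Delta_1})$, $K_2 := \QQ(\sqrt{\Delta_2})$, $K_3 := \QQ(\sqrt{\Delta_1\Delta_2})$. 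Rewriting $u_n = u_0(c_g/c_f)^n\prod_r(1-\beta_r)_n/\prod_s(1-\alpha_s)_n$ with $\{\alpha_s\},\{\beta_r\}$ enumerating the roots of $f,g$ with multiplicity, and applying Stirling, one obtains $\log|u_n| = n\log|c_g/c_f| + O(\log n)$ when $\deg f = \deg g$ and faster growth or decay otherwise. Consequently, if $|c_g/c_f|\neq 1$ or $\deg f\neq\deg g$ the archimedean contribution is already linear in $n$; I henceforth assume $\deg f = \deg g$ and $|c_g/c_f|=1$.

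For each rational prime $p$ that is unramified in the biquadratic field $L := \QQ(\sqrt{\Delta_1},\sqrt{\Delta_2})$ and large enough to avoid all discriminants and denominators involved, a Legendre-style count of roots of $F(k)\equiv 0\pmod{p^s}$ for $k\in[1,n]$ yields $v_p(u_n) = \frac{n}{p-1}\delta_p + O(\log n)$, where $\delta_p := \sum_j d_j\kappa_p(G_j) - \sum_i e_i\kappa_p(F_i)$ and $\kappa_p(F)\in\{0,1,2\}$ records the splitting of $p$ in the splitting field of $F$ (namely $1$ for linear $F$, and $0$ or $2$ for quadratic $F$ according to whether $p$ is inert or split). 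If $\delta_p\neq 0$ for some such prime, then $|v_p(u_n)|\gg n$ and $h_\weil(u_n)\gg n$ follows. The map $p\mapsto\delta_p$ is a $\QQ$-linear combination of the four characters of $\Gal(L/\QQ)\cong(\ZZ/2\ZZ)^2$, so by Chebotarev's theorem $\delta_p$ vanishes on every large prime iff the \emph{Galois signatures} of $f$ and $g$ agree, i.e., they have equal numbers of linear factors and of quadratic factors in each splitting field $K_t$.

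The matched-signature case is the crux. Pair each factor of $g$ with a factor of $f$ sharing its splitting field. Because $u_n\notin\QQ(n)$ (class~$\mathscr{C}$) and $c_g = \pm c_f$, Gosper's criterion for hypergeometric telescoping rules out every pairing that makes each paired root of $g$ an integer shift of the corresponding root of $f$; fix an offending pair $(F,G)$ with roots $\alpha,\bar\alpha$ and $\beta,\bar\beta$ in a common field $K$ satisfying $\beta\neq\alpha+m$ for every $m\in\ZZ$. For primes $p\in(n,2n)$ that split in $K$ and avoid the finite set dividing $N_{K/\QQ}(\alpha-\beta)$ or $N_{K/\QQ}(\alpha-\bar\beta)$, the reductions of $\alpha$ and $\beta$ modulo the two primes of $\cO_K$ above $p$ give distinct pairs of residues in $\F_p$; since $p>n$, at most one $k\in[1,n]$ lies in each residue class, so $|v_p(u_n)|$ equals the absolute difference of the hit-counts (in $[1,n]$) of the residues of $\alpha$ versus those of $\beta$. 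Aggregating across $p\in(n,2n)$ by an equidistribution argument (Chebotarev in an appropriate larger extension together with a direct residue-sum estimate) yields $\sum_p|v_p(u_n)|\log p\gg n$, which delivers the required lower bound.

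The principal obstacle is precisely this matched-signature case: first-order $p$-adic valuations cancel, and linear growth must be extracted from the finer positional information of modular roots. Converting the Gosper non-telescoping obstruction into the quantitative equidistribution statement above, over primes of size $\sim n$, is the technical heart of the argument.
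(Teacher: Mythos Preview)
Your case split into archimedean, $p$-asymmetric, and matched-signature regimes is reasonable, and the first two regimes are handled correctly. However, the matched-signature case --- which you rightly identify as the crux --- has two genuine gaps.

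First, the claim that for primes $p\in(n,2n)$ split in $K$, ``$|v_p(u_n)|$ equals the absolute difference of the hit-counts (in $[1,n]$) of the residues of $\alpha$ versus those of $\beta$'' is wrong: $\nu_p(u_n)$ receives contributions from \emph{every} irreducible factor of $f$ and $g$, not just the offending pair $(F,G)$. Linear factors and the remaining quadratic factors (in $K$ or in the other $K_t$) each contribute $\pm 1$ whenever one of their residues lands in $[1,n]$, and nothing in your setup forces these to cancel. The paper deals with this by (i) exploiting the biquadratic shape of class~$\mathscr{C}$ to choose $p$ with $\bra{\tfrac{\Delta}{p}}=1$ for one $\Delta\in\cD$ and $\bra{\tfrac{\Delta'}{p}}=-1$ for every other $\Delta'$, killing all quadratic factors of discriminant $\neq\Delta$; (ii) taking $p\sim n/\delta$ with $\delta$ small, so that every rational root has residue exceeding $n$; and (iii) invoking regularity together with an algebraic lemma (\cref{lem:approx-eq}) to show that once the chosen quadratic factor places a root just below $n$, no other quadratic factor with the same discriminant can do so.

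Second, and more seriously, your ``equidistribution argument (Chebotarev in an appropriate larger extension together with a direct residue-sum estimate)'' is not an argument. Chebotarev controls the splitting type of $p$ --- a Frobenius conjugacy class --- and says nothing about \emph{where} in $\{0,1,\dots,p-1\}$ a root $\rep(r+s\sqrt{\Delta})$ lands. The input actually needed is equidistribution of $\rep(r+s\sqrt{\Delta})/p$ in $[0,1]$ as $p$ runs over primes in an arithmetic progression; this is the Duke--Friedlander--Iwaniec theorem in the form refined by T\'oth (\cref{thm:toth}, \cref{thm:toth-quant}), and it is the engine of the paper's proof. It is used to place one carefully selected quadratic root in the window $((1-\e)n,n)$ for $\gg n/\log n$ primes $p\sim n/\delta$, each then forced to satisfy $\nu_p(u_n)\neq 0$; summing $\log p$ over these primes yields $h_{\weil}(u_n)\gg n$. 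Without this deep ingredient your sketch has no mechanism to produce even a single prime with $\nu_p(u_n)\neq 0$ in the matched-signature case.
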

The proof of \cref{thm:main} is a straightforward corollary of \cref{prop:quadratic,prop:heights} in \cref{sec:growth}.

 \begin{ndefinition}[Class \(\mathscr{D}\)] 
 \hypertarget{defin:D}
     Let \(\mathscr{D}\) denote the family of {non-rational} hypergeometric sequences
     whose parameters
     \(\alpha_1,\ldots, \alpha_d\) and \(\beta_1,\ldots, \beta_d\)
     (the roots of \(f\) and \(g\) respectively) satisfy the following condition: %
     there is no permutation \(\sigma\in S_d\) for which
    \(\mathbb{Q}(\alpha_i) = \mathbb{Q}(\beta_{\sigma (i)})\) holds for all \(1\le i\le d\).
 \end{ndefinition}

  \begin{restatable}{theorem}{theoremasymmetry}
 \label{thm:asymmetry}
 Let \(\seq[\infty]{u_n}{n=0}\) be a hypergeometric sequence in \hyperlink{defin:D}{class \(\mathscr{D}\)} that, in addition, satisfies either of the following conditions.
\begin{enumerate}
    \item  Each of the irreducible factors of the polynomial \(fg\) has degree at most two.
    \item The splitting field of \(fg\) is cyclotomic.
\end{enumerate}
Then \(h_{\weil} \gg n\).  Here the implied constant is computable and, further, depends only on \(fg\) and a prime \(p\).
 \end{restatable}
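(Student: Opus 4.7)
The plan is to iterate the recurrence to obtain the product formula $u_n = u_0 \prod_{k=1}^n g(k)/f(k)$ and then to exhibit a single prime $p$ at which the $p$-adic valuation $v_p(u_n)$ grows linearly in $n$; since $h_{\weil}(u_n) \geq |v_p(u_n)|\, \log p$, this yields the desired conclusion. For any $h \in \QQ[x]$ and prime $p$, a standard counting argument in terms of the number $N_j(h,p)$ of residues modulo $p^j$ at which $h$ vanishes gives
\[
\sum_{k=1}^n v_p(h(k)) = c_p(h) \cdot n + O_{h,p}(\log n), \qquad c_p(h) := \sum_{j \geq 1} \frac{N_j(h,p)}{p^j}.
\]
Consequently $v_p(u_n) = (c_p(g) - c_p(f))\, n + O_p(\log n)$, so it suffices to produce a prime $p$ with $c_p(g) \neq c_p(f)$.

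To find such a $p$ I would exploit the Galois structure. Let $L$ denote the splitting field of $fg$ and $G = \gal(L/\QQ)$; under either hypothesis $G$ is abelian (multi-quadratic under condition~1, contained in $(\ZZ/n)^*$ under condition~2). For unramified $p$ we have $c_p(h) = N_1(h,p)/(p-1) = \chi_h(\Frob_p)/(p-1)$, where $\chi_h(\sigma)$ is the permutation character of $G$ acting on the roots of $h$. By Chebotarev density it therefore suffices to show $\chi_f \neq \chi_g$ as class functions on $G$. Under condition~1, the stabilizer of each root is either $G$ itself or an index-two subgroup; decomposing via $\mathbf{1}[\sigma \in H_\chi] = (1+\chi(\sigma))/2$ and invoking linear independence of the characters of $G$ shows that $\chi_h$ is determined by, and in turn determines, the multiset $\{\QQ(\alpha_i)\}$ of residue fields. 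Under condition~2 a parallel argument using the well-behaved subgroup lattice of a cyclotomic Galois group (in the cyclic case each subfield is pinned down by its index, and the indicators $\mathbf{1}[\sigma \in H_d]$ are linearly independent by Möbius inversion) yields the same conclusion. In either case class $\mathscr{D}$ forces $\chi_f \neq \chi_g$; a discriminating Frobenius element then gives, via Chebotarev, a prime $p$ with $N_1(f,p) \neq N_1(g,p)$.

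The principal obstacle is precisely this character-theoretic step, since for a \emph{general} abelian $G$ the multiset of stabilizers does not determine the permutation character: for example, in $G = (\ZZ/2)^2$ the multisets $\{\{e,a\}, \{e,b\}, \{e,a+b\}\}$ and $\{\{e\}, G, G\}$ induce isomorphic permutation representations of $G$. Conditions~1 and~2 are the structural hypotheses that rule out such cancellations, so the bulk of the work lies in verifying recoverability of $\{\QQ(\alpha_i)\}$ from $\chi_h$ under each condition; in borderline cases where unramified primes fail to distinguish $f$ and $g$, one falls back on a prime $p$ ramified in a field $K$ appearing with asymmetric multiplicity, whence the higher-order terms in $c_p(\cdot) = \sum_j N_j/p^j$ detect the extra ramification. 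Once a suitable prime $p$ is identified, the lower bound
\[
h_{\weil}(u_n) \;\geq\; |v_p(u_n)|\, \log p \;\gg\; \frac{|N_1(g,p) - N_1(f,p)|\, \log p}{p-1}\, n
\]
follows, with implied constant depending only on $fg$ and the chosen $p$, as claimed.
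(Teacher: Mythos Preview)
Your proposal is correct and follows essentially the same route as the paper. Both arguments (i) reduce the height bound to finding a single prime $p$ at which $f$ and $g$ have different numbers of roots modulo $p$ (what the paper calls $p$-asymmetry), using the linear growth formula $\nu_p(u_n) = \frac{m_g-m_f}{p-1}\,n + O(\log n)$ for Hensel primes; (ii) reformulate the existence of such a prime, via Chebotarev, as $\chi_f \neq \chi_g$ as class functions on $\gal(K/\QQ)$; and (iii) show under each hypothesis that the permutation character $\chi_h$ determines the multiset $\{\QQ(\a_i)\}$. Your linear-independence-of-characters argument under condition~1 and your M\"obius-inversion argument for cyclic $G$ under condition~2 are exactly the paper's \cref{prop:intermediate} and \cref{prop:cyclic}, phrased in slightly more representation-theoretic language.

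One remark: the ``fallback'' sentence about ramified primes and higher-order terms $N_j/p^j$ is unnecessary and somewhat misleading. Under either hypothesis your own argument already shows that class~$\mathscr{D}$ forces $\chi_f \neq \chi_g$, so a discriminating \emph{unramified} prime always exists; there are no borderline cases to handle, and the paper does not invoke ramified primes at all. You can simply delete that clause.
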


\cref{thm:asymmetry} follows as a straightforward corollary of \cref{prop:intermediate,prop:cyclic} in \cref{sec:divergence}.
{A minor contribution in \cref{sec:membership} is an application of our effective result in \cref{thm:asymmetry} towards decision procedures for the Membership Problem from theoretical computer science (\cref{thm:intermediate}). 
 }

\subsection*{Approach}
On the one hand, \cref{thm:main,thm:asymmetry} both concern linear growth estimates for the Weil heights of hypergeometric sequences.
On the other hand, the approaches taken towards these theorems are fundamentally different.
We arrive at the non-effective result in \cref{thm:main} (\cref{sec:growth}) by way of a lower-bound on the number of large prime divisors that contribute towards the linear growth of the Weil heights. %
By contrast, the effective results in \cref{thm:asymmetry} (\cref{sec:divergence}) follow from observations on the \(p\)-adic valuations of certain hypergeometric sequences. More specifically, we prove the existence of a prime \(p\) for which the \(p\)-adic valuations of such sequences diverge. In fact, the set of primes for which this estimate holds has positive density by Chebotarev's theorem (see \cref{prop:large:Galois}); however, for our purposes we need only exhibit one such prime.

\subsection*{Related Works}

\subsubsection*{The prime divisors of hypergeometric sequences}
The \(p\)-adic techniques herein bear many similarities with the methods employed in previous works on hypergeometric sequences.
Researchers have long been interested in developing criteria to establish whether the terms of a hypergeometric sequence are integer valued.
Research in this direction includes early work by Landau~\cite{landau1900factorielles}, which uses \(p\)-adic analysis to establish a necessary and sufficient condition for the integrality in the class of factorial hypergeometric sequences. Authors such as Dwork~\cite{dwork1973hypergeometric} and Christol~\cite{christol1986fonctions} gave criteria for the p-adic integrality of hypergeometric sequences with rational parameters.
Closer to our setting, Hong and Wang~\cite{hongarxiv2016} establish a
criterion for the integrality of hypergeometric series with parameters from quadratic
fields.

More recent work, by Franc, Gannon, and Mason~\cite{franc2018unbounded}, considers \(p\)-adic unboundedness (therein a hypergeometric sequence is \emph{\(p\)-adically unbounded} if arbitrarily high powers of \(p\) appear in the denominators of the terms of the sequence).
Those authors show that the set of primes where the coefficient sequence of a hypergeometric series \(\pFq{2}{1}\) with rational parameters is unbounded is (essentially) given by a finite union certain arithmetic progressions. 
The main result in \cite{franc2020densities} gave a formulation for the Dirichlet density of the set of \(p\)-adically bounded primes for such hypergeometric sequences.

Previous works~\cite{maynard2021lower,moll2009,kenison2023membership} have leveraged techniques concerning prime divisors in order to characterise the asymptotic growth of \(\nu_p(u_n)\) as \(n\to\infty\) where \(\seq[\infty]{u_n}{n=0}\) is monic hypergeometric sequence.
Recall that a hypergeometric sequence \(\seq[\infty]{u_n}{n=0}\) is \emph{monic} if it satisfies a first-order recurrence relation of the form \(u_n = g(n) u_{n-1}\).
The characterisations for asymptotic growth are given in terms of the number of roots of \(g\) in \(\Z/p\Z\), which we obtain from Hensel's lemma.

Our result in \cref{thm:main} establishes a growth estimate for hypergeometric sequences with quadratic parameters.
Our approach
relies on machinery developed in the study of roots of quadratic congruences to prime moduli.
Groundbreaking work by Duke, Friedlander, and Iwaniec~\cite{dukefriedlanderiwaniec1995} showed that, in the limit, the normalised roots of a quadratic polynomial with negative discriminant are uniformly distributed for prime moduli. 
In this work, we employ a refined version of this result (\cref{thm:toth} due to T\'{o}th) that establishes uniform distribution for prime moduli in an infinite arithmetic progression.

\subsubsection*{Membership for hypergeometric sequences}
In \cref{sec:membership}, we give an application of our effective results in \cref{sec:divergence} towards the Membership Problem.
Recall that Membership asks to procedurally determine whether a chosen target value is an element of a given sequence.
We postpone our discussion of the background and motivation for the Membership Problem to \cref{sec:membership}.

Our approach towards growth estimates via prime divisibility properties of hypergeometric sequences is reminiscent of the approaches in two previous works \cite{kenison2023membership, nosan2022membership} on the Membership Problem for hypergeometric sequences.
In \cite{nosan2022membership}, the authors established decidability of the Membership Problem for hypergeometric sequences with rational parameters.
Closer to our setting, the authors of \cite{kenison2023membership} proved that the Membership Problem is decidable for the class of sequences whose polynomial coefficients are both monic and split over a quadratic field (in other words, the parameters of the sequences are integers in a quadratic extension of \(\Q\)).
For comparison, our non-effective growth estimate in \cref{thm:main} does not assume that \(f\) and \(g\) are monic and, further, relaxes the condition that the parameters are elements of a single quadratic number field to that of elements of a union of quadratic fields (see class \hyperlink{defin:C}{\(\mathscr{C}\)}).

An entirely different approach towards Membership for hypergeometric sequences is seen in~\cite{kenison2024threshold}.
Therein the (un)conditional decidability results properties on the algebraic independence between mathematical constant such as \(\pi\), \(\eu\), and \(\eu^{\pi}\). 
The conditional decidability results for Membership Problem for hypergeometric sequences with quadratic parameters in \cite{kenison2024threshold} are subject to the truth of a weak form of Schanuel's conjecture~\cite{lang1966introduction}.

\subsubsection*{Growth estimates for C-finite sequences}
We step back from the class of hypergeometric sequences and briefly consider growth estimates for the class of C-finite sequences. 
Recall that an integer-valued sequence \(\seq[\infty]{u_n}{n=0}\) is \emph{C-finite} if it obeys a linear recurrence relation of the form \(u_{n+d} = a_{d-1} u_{n+d-1} + \cdots + a_1 u_{n+1} + a_0 u_n\) where \(a_0,a_1,\ldots, a_{d-1}\in\Z\) and \(a_0\neq 0\).
Thus a given C-finite sequence is uniquely defined by its recurrence relation and a given set of initial values \(u_0,\ldots, u_{d-1}\).
The polynomial \(f(x) = x^d - a_{d-1}x^{d-1} - \cdots - a_1x - a_0\) and its roots \(\lambda_1,\ldots, \lambda_m\) are the \emph{characteristic polynomial} and \emph{characteristic roots} associated with the relation.
Such a sequence is \emph{non-degenerate} if none of the characteristic roots nor ratios of distinct characteristic roots is a root of unity.  
If \(\seq[\infty]{u_n}{n=0}\) is degenerate, then there exists a computable constant \(M\) such that each subsequence \(\seq[\infty]{u_{nM+r}}{n=0}\) with \(r\in\{0,1,\ldots, M-1\}\) is non-degenerate.

Let \(\seq[\infty]{u_n}{n=0}\) be an integer linear recurrence sequence and \(r,\alpha>0\) respectively denote the maximum modulus and maximum multiplicity of its characteristic roots, then standard observations show that \(u_n = O(n^\alpha r^n)\) where the implied constant is effectively computable (cf.~\cite{everest2003recurrence}).
Loxton and van der Poorten~\cite{loxton1977growth} predicted that non-degenerate integer-valued C-finite sequences attain the maximal possible growth rate; that is to say, for each \(\varepsilon>0\) there is an effectively computable constant \(C(\varepsilon)\) such that \(|u_n|>r^{n(1-\varepsilon)}\) whenever \(n>C(\varepsilon)\).
Employing techniques on the sums of $S$-units due to Evertse~\cite{evertse1984sums}, independent works by Fuchs and Heintze~\cite[Theorem A.1]{fuchs2021} and Karimov et al.~\cite[Theorem 2]{karimov2023power} have given non-effective proofs of this conjecture.
In related work, Noubissie~\cite{noubissie2025} has made recent progress in the direction of the conjecture by giving explicit upper bounds on the number solutions of \(|u_n|<r^{n(1-\varepsilon)}\).

\section{Preliminaries}\label{sec:preliminaries}

\subsection*{Hypergeometric sequences}

Let $\seq[\infty]{u_n}{n=0}$ be a hypergeometric sequence satisfying the recurrence relation \begin{equation}%
	f(n)u_{n} = g(n)u_{n-1}, \tag{\ref{eq:setup:def-u}}
\end{equation}
for all \(n \geq 1\), where $f,g\in \QQ[x]$ are polynomials with rational coefficients and the initial value $u_0 \in \QQ$ is rational. 
We make the additional assumption that the coefficient $f$ in \eqref{eq:setup:def-u} has no positive integer roots, which ensures that the terms $u_n \in\QQ$ are well-defined for each $n \geq 1$. 
To avoid triviality, we also assume that $g$ has no positive integer roots, since otherwise $u_n = 0$ for all sufficiently large $n$. 
Thus, letting $r(x) = g(x)/f(x) \in \QQ(x)$ denote the ratio between the two polynomials, for all $n \geq 1$ we have 
\begin{equation*}\label{eq:setup:def-u-ratio}
u_{n} = r(n)u_{n-1},
\end{equation*}
and consequently the $n$th term $u_n$ is given by the following product:
\begin{equation}\label{eq:setup:def-u-prod}
u_{n} = u_0 \prod_{m=1}^{n} r(m) = u_0 \prod_{m=1}^{n} \frac{g(m)}{f(m)}.
\end{equation}

Dividing $f$ and $g$ by any common factors, we may freely assume that $f$ and $g$ are coprime. We will say that the recurrence \eqref{eq:setup:def-u} is \emph{regular} if additionally all the roots of $f$ and $g$ are distinct up to integer shifts, meaning that for each $\xi \in \CC$ with $f(\xi)g(\xi) = 0$ we have $f(\xi+d)g(\xi+d) \neq 0$ for all $d \in \ZZ \setminus \{0\}$. 
Of course, not all hypergeometric sequences are regular.
However, we can ensure regularity at the cost of introducing a rational factor.

\begin{lemma}\label{lem:setup:regularise}
	Let $\seq[\infty]{u_n}{n=0}$ be a hypergeometric sequence given by \eqref{eq:setup:def-u}. Then there exists a regular hypergeometric sequence $\seq[\infty]{\tilde u_n}{n=0}$ and a rational function $q(x) \in \QQ(x)$ such that $u_n = q(n)\tilde u_n$ for all $n \geq 0$. 
\end{lemma}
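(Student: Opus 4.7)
The plan is iterated reduction: I will describe a single \emph{reduction step} which, given a coprime pair $(f,g) \in \QQ[x]^2$ that fails to be regular, produces a new coprime pair $(\tilde f,\tilde g)$ and a rational function $h(x) \in \QQ(x)$ such that $\tilde u_n := h(n)\,u_n$ satisfies $\tilde f(n)\,\tilde u_n = \tilde g(n)\,\tilde u_{n-1}$ and $(\tilde f,\tilde g)$ is strictly ``less irregular'' than $(f,g)$ under an appropriate measure. Iterating this step and composing the resulting rational factors gives the required $q(x) \in \QQ(x)$.

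For the reduction step, pick distinct complex roots $\xi,\eta$ of $fg$ with $d := \eta-\xi \in \ZZ_{>0}$, and let $m(x) \in \QQ[x]$ be the minimal polynomial of $\xi$. A crucial observation is that no Galois automorphism of $\overline{\QQ}/\QQ$ can translate an algebraic number by a nonzero integer---iterating a putative translation $\xi\mapsto\xi+d$ would contradict the finiteness of the orbit of $\xi$---and consequently $m(x)$ and $m(x-d)$ are coprime in $\QQ[x]$; by Galois-equivariance $m(x)$ divides whichever of $f,g$ has $\xi$ as a root while $m(x-d)$ divides whichever has $\eta$. The key telescoping identity
\[
\frac{h(x)}{h(x-1)} \;=\; \frac{m(x)}{m(x-d)},\qquad h(x) := \prod_{j=0}^{d-1} m(x-j) \in \QQ[x],
\]
drives all the cases: one sets $\tilde u_n = h(n)^{\pm 1}\,u_n$ and computes $\tilde u_n/\tilde u_{n-1}$ using the identity, choosing the sign so that the factors introduced by the telescoping cancel with their counterparts in $f(n)$ and $g(n)$. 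In the opposite-side subcase (say $\xi \in Z(f)$, $\eta \in Z(g)$, with $f = m f_1$ and $g = m(\cdot-d)\,g_1$), this strips $m$ from $f$ and $m(\cdot-d)$ from $g$, strictly decreasing $\deg f + \deg g$ by $2\deg m$. In the same-side subcase (say $\xi,\eta \in Z(f)$, with $f = m\,m(\cdot-d)\,f_1$), the factors $m\cdot m(\cdot-d)$ are instead consolidated into $m^2$, so that $\tilde f = m^2 f_1$ and $\tilde g = g$---trading root diversity for root multiplicity while preserving degrees.

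Termination follows from the strict decrease of a suitable well-founded measure---for instance, the number of excess distinct complex roots of $fg$ in integer-shift equivalence classes, weighted by the smaller of the two multiplicities in each such class---which vanishes precisely when $(f,g)$ is regular. The main technical subtlety is the same-side case, where the offending factors cannot be cancelled inside $g/f$ directly and must be consolidated into a multiplicity-raised root, requiring a non-degree-reducing step. A minor nuisance is that the rational function $q(x)$ may have poles or zeros at finitely many non-negative integers, which can be absorbed into the finitely many initial terms of $\tilde u_n$---these are free parameters of the first-order recurrence.
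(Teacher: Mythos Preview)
Your iterative-reduction approach differs from the paper's: there a one-shot construction is given by grouping the irreducible factors of $f$ and $g$ into integer-shift equivalence classes, choosing one representative $h_k$ per class (arranged to have no positive integer roots), and setting $\tilde r(x) = \prod_k h_k(x)^{\gamma_k}$ where $\gamma_k$ is the net exponent (number of factors coming from $g$ minus number from $f$ in class $k$). Regularity of $\tilde u$ is then automatic, and $u_n/\tilde u_n$ is written as a product of telescoping ratios $\prod_{m=1}^n h'(m)/h(m)$ with $h \sim h'$, each a rational function of $n$---the same identity you use. The direct route's chief advantage is that no termination argument is needed.

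Termination is where your argument has a genuine gap. The measure you propose---excess distinct roots ``weighted by the smaller of the two multiplicities''---does not decrease under the same-side step: if $\xi$ and $\eta=\xi+d$ are both roots of $f$ with multiplicities $a_0$ and $a_d$ satisfying $a_0 < a_d - 1$, your step sends $(a_0,a_d)\mapsto(a_0+1,a_d-1)$ and $\min(a_0,a_d)$ \emph{increases}. A measure that does work is $M=\sum_C\sum_d (d-d_{\min,C})\cdot\mathrm{mult}_{fg}(d)$, summed over shift classes $C$ with $d_{\min,C}$ the leftmost occupied shift in $C$: it is non-negative, vanishes precisely when $(f,g)$ is regular, drops by $d_\eta-d_\xi>0$ under a same-side step (one unit of mass moves strictly leftward), and also drops under an opposite-side step (mass is removed, including one unit at $d_\eta>d_{\min,C}$). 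Incidentally, your final worry about poles of $q$ at small $n$ is unfounded under the standing hypotheses: the roots of $h(x)=\prod_{j=0}^{d-1}m(x-j)$ are $\xi,\xi+1,\dots,\eta-1$ and their Galois conjugates, which are either non-integers or integers at most $\eta-1<0$, so $h(n)\neq 0$ for every $n\geq 0$; your proposed fix via ``free initial terms'' of $\tilde u$ would not have worked anyway, since the first-order recurrence leaves only $\tilde u_0$ free.
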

\begin{proof}
	Let $f(x) = \lambda \prod_{i=1}^I f_i(x)$ and $g(x) = \mu \prod_{j=1}^J g_j(x)$ be the factorisations of $f$ and $g$ into irreducible monic factors. Pick monic polynomials $h_k(x) \in \QQ[x]$ for $1 \leq k \leq K$ such that
    
\begin{enumerate}
\item for each polynomial $h \in \set{f_i}{1 \leq i \leq I} \cup \set{g_j}{1 \leq j \leq J}$ there exists $1 \leq k \leq K$ and $d \in \ZZ$ such that $h(x) = h_k(x+d)$;
\item the polynomials $h_k$, $1 \leq k \leq K$, are pairwise distinct up to integer shifts, meaning that there are no $1 \leq k < l \leq K$ and $d \in \ZZ$ such that $h_k(x) = h_l(x+d)$.
\end{enumerate}
In other words, we obtain $h_k$ ($1 \leq k \leq K$) by picking a {single} representative from each equivalence class of $f_i$ ($1 \leq i \leq I$) and $g_j$ ($1 \leq j \leq J$) with respect to an equivalence relation $\sim$ on $\QQ[x]$, where polynomials \(h,h'\in\QQ[x]\) are \emph{\(\sim\)-equivalent} if and only if $h$ and $h'$ differ by an integer shift, $h(x) = h'(x+d)$ for some $d \in \ZZ$.
Following our earlier observations, we may freely assume that each $h_k$ ($1 \leq k \leq K$) has no positive integer roots.
Pick $1 \leq k \leq K$ and let $I_k$ (resp.\ $J_k$) denote the set of those $1 \leq i \leq I$ (resp.\ $1 \leq j \leq J$) for which we have $h_k \sim f_i$ (resp.\ $h_k \sim g_j$). Let $\gamma_k = \# J_k - \# I_k$, $\tilde r(x) = \prod_{k=1}^{K} h_k(x)^{\gamma_k}$ and let $\tilde f(x),\tilde g(x) \in \QQ[x]$ be the coprime polynomials such that $\tilde r(x) = \tilde g(x)/\tilde f(x)$. More explicitly, $\tilde f$ and $\tilde g$ are given by
\begin{equation*}
	\tilde g(x) = \prod_{k=1}^K h_k^{\max(\gamma_k,0)}{(x)} \quad \text{and} \quad 
	\tilde f(x) = \prod_{k=1}^K h_k^{\max(-\gamma_k,0)}{(x)}.	
\end{equation*}

Let $\seq[\infty]{\tilde u_n}{n=0}$ be the hypergeometric sequence with $\tilde u_0 = u_0$ that satisfies the recurrence relation 
\begin{equation*}\label{eq:setup:def-u-tilde}
	\tilde f(n) \tilde u_{n} = \tilde g(n) \tilde u_{n-1}
\end{equation*}
for all $n \geq 1$.
By construction, $\seq[\infty]{\tilde u_n}{n=0}$ is regular. It remains to show that the ratio $\tilde u_n/u_n$ is a rational function of $n$. We have
\begin{equation*}
	\frac{\tilde u_n}{u_n} = \prod_{m=1}^n \prod_{k=1}^K 
	\bra{\prod_{j \in J_k} \frac{g_j(m)}{{h_k(m)}} } 
	\bra{\prod_{i \in I_k} \frac{f_i(m)}{{h_k(m)}} }^{-1} .
\end{equation*}
Working with each factor separately, it will suffice to show that for each $h \sim h'$ the product $\prod_{m=1}^n {h'(m)}/{h(m)}$ is a rational function of $n$, which is a simple consequence of the fact that all but a bounded number of terms in the product cancel out. Indeed, if $h(x) = h'(x+d)$ with $d \geq 0$ then   for all $n \geq d$
\begin{equation*}
\prod_{m=1}^n \frac{h'(m)}{h(m)} = \frac{\prod_{m=1}^n h'(m)}{\prod_{m=1}^n h'(m+d)} = \frac{\prod_{m=1}^d h'(m)}{\prod_{m=n+1}^{n+d} h'(m)},  
\end{equation*}
and one can check (either by backwards induction, or by direct computation) that the same formula holds for all $n \geq 0$. The case where $h(x) = h'(x+d)$ with $d < 0$ is entirely analogous.
An extended account is given in \cite[Appendix B]{nosan2022membership}.
\end{proof}

\subsection*{\texorpdfstring{\(p\)-adic}{p-adic} analysis}
In this subsection we briefly introduce the common notations and terminology that will be employed throughout the sequel.

For a non-zero rational number $r$, the \emph{square-free part of $r$} %
is the unique integer $d$ such that $r = q^2 d$ for some rational $q$. 
For the sake of completeness, we define the square-free part of $0$ to be $1$. We call an integer $n$ if it is not divisible by a square of any prime, meaning that it is equal to its square-free part.
As a quick example, consider $r=\tfrac{1}{8}$; then $q=\tfrac{1}{4}$ and $d=2$.

Let \(p\in\N\) be a prime. Denote by \(\nu_p\colon \QQ\to\ZZ\cup\{\infty\}\) the \emph{\(p\)-adic valuation} on \(\QQ\).
We recall that for every non-zero \(x\in\QQ\), the valuation \(\nu_p(x)\) is the unique integer for which the equality
    \(x = p^{\nu_p(x)}\tfrac{a}{b}\)
holds (where \(a,b\in\Z\) and \(p \nmid a,b\)). We define \(\nu_p(0) \coloneqq \infty\).
For a a rational number \(r\)  whose denominator is not divisible by $p$, we let $\rep(r)$ denote the representative of $r$ modulo $p$, that is, the unique integer in $\{0,1,\dots,p-1\}$ such that $\nu_p(r - \rep(r)) > 0$. 

For a prime $p$ and an integer \(n\), we let $\bra{\frac{n}{p}}$ denote the Legendre symbol.
Given a square-free integer $\Delta$ with $\bra{\frac{\Delta}{p}} = 1$, pick an integer $D$ with $D^2 \equiv \Delta \pmod p$ (for concreteness, we may require e.g.\ that $0 \leq D < p/2$). For a number $r + s \sqrt{\Delta} \in \QQ(\sqrt{\Delta})$ such that the denominators of $r$ and $s$ are not divisible by $p$, we let $\rep(r+s\sqrt{\Delta})$ denote the integer $\rep(r+sD)$. 

\subsection*{Galois theory}

Let \(K\) be a number field that is Galois over \(\QQ\). We let \(\mathcal{O}_K\) denote the ring of integers in \(K\).
Suppose that \(\mathfrak{p}\) is a prime of \(K\) lying over the rational prime \(p\in\Z\).
We call the subgroup \(D(\mathfrak{p}) \coloneqq \{\sigma \in \mathrm{Gal}(K/\QQ) \mid  \sigma(\mathfrak{p}) = \mathfrak{p}\}\) the \emph{decomposition group of \(\mathfrak{p}\)}.
It is known that each element of \(D(\mathfrak{p})\) acts in a well-defined way on the finite field \(\FF_\mathfrak{p} = \mathcal{O}_K / \mathfrak{p}\) and, in addition, that this action
 fixes \(\FF_{p}\), the finite field with \(p\) elements.
 
Thus each element of \(D(\mathfrak{p})\) is an element of \(\mathrm{Gal}(\FF_\mathfrak{p}/\FF_p)\).
When \(p\) is unramified in \(K\), it is well-known that the groups \(D(\mathfrak{p})\) and \(\mathrm{Gal}(\FF_\mathfrak{p}/\FF_p)\) are isomorphic.
Further, the group \(\mathrm{Gal}(\FF_\mathfrak{p}/\FF_p)\) is cyclic with a canonical choice of generator, the \emph{Frobenius element} \(\mathrm{Fr}_p \colon x \mapsto x^p\).
Lifting this element via the aforementioned isomorphism to \(D(\mathfrak{p})\) gives an element \(\mathrm{Fr}_\mathfrak{p}\).

\subsection*{Roots of quadratic congruences to prime moduli}
The investigation of the distribution of roots of quadratic polynomials modulo primes was initiated by Duke, Friedlander and Iwaniec \cite{dukefriedlanderiwaniec1995}, and continued by 
T\'{o}th \cite{Toth-2000} (see also Homma~\cite{homma2008} and Ngo~\cite{Ngo-2024+}).
We will say that a sequence $\seq{x_n}{n \in I}$ indexed by an infinite subset of $\NN$ is \emph{uniformly distributed} in $[0,1]$ if
\begin{equation*}
	\frac{ 
		\#\! \set{n \in I}{ n < N,\ x_n \in [\a,\b)}
	}{
		\#\! \set{n \in I}{ n < N}
	} \to \b-\a 
	 \text{ as } N \to \infty.
\end{equation*} 
Similarly, we say that a $k$-tuple of sequences $\langle x_n^{(i)}\rangle_{n \in I}$ ($1 \leq i \leq k$) is \emph{uniformly distributed} in $[0,1]$ if
\begin{equation*}
	\frac{ 
		\sum_{i=1}^k \# \{n \in I \mid n < N,\ x_n^{(i)} \in [\a,\b)\}
	}{
		k \cdot \#\! \set{n \in I}{ n < N}
	} \to \b-\a 
	 \text{ as } N \to \infty.
\end{equation*}

\begin{theorem}[T\'oth~\cite{Toth-2000}]\label{thm:toth}
	Let $\Delta \in \NN$ be square-free, $q \in \NN$, $a \in \ZZ/q\ZZ$ and let $P$ be the set of primes $p \equiv a \pmod q$ such that $\bra{\frac{\Delta}{p}} = 1$. 
    Let $P'\subset P$ be the subset of primes such that for $r,s \in \QQ$, the sequence $\seq{\rep(r \pm s \sqrt{\Delta})/p}{p \in P'}$ is well-defined.
    If \(P'\) is infinite, then for each \(r,s\in\QQ\) with \(s\neq 0\), $\seq{\rep(r \pm s \sqrt{\Delta})/p}{p \in P'}$ is {uniformly distributed} %
    in $[0,1]$.
\end{theorem}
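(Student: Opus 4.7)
\textbf{Proof proposal for \cref{thm:toth}.} The plan is to apply Weyl's equidistribution criterion to reduce the claim to a power-saving bound on an exponential sum over primes, and then to obtain such a bound via the spectral-theoretic machinery of Duke, Friedlander and Iwaniec, refined so as to accommodate the arithmetic progression condition \(p \equiv a \pmod q\).

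By Weyl's criterion applied to the \(2\)-tuple, the desired equidistribution is equivalent to showing that for every nonzero integer \(h\),
\begin{equation*}
	S_h(X) \;=\; \sum_{p \in P',\, p \leq X} \Bigl( e\bigl(h\rep(r+s\sqrt{\Delta})/p\bigr) + e\bigl(h\rep(r-s\sqrt{\Delta})/p\bigr)\Bigr) \;=\; o\bigl(\pi_{P'}(X)\bigr),
\end{equation*}
where \(e(x) := e^{2\pi i x}\) and \(\pi_{P'}(X) := \#\{p \in P' : p \leq X\}\). After clearing denominators one may assume \(r,s \in \ZZ\). Writing \(D = D(p)\) for the chosen square root of \(\Delta\) modulo \(p\), the two summands are \(e(h(r\pm sD)/p)\), and their sum factorises as \(2e(hr/p)\cos(2\pi hsD/p)\). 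Expanding the cosine, the task reduces to proving that for every nonzero integer \(b\),
\begin{equation*}
	T_b(X) \;=\; \sum_{p \in P',\, p \leq X} e\bigl(bD(p)/p\bigr) \;\ll\; X^{1-\delta}
\end{equation*}
for some \(\delta = \delta(\Delta,q,b) > 0\).

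To establish the bound on \(T_b(X)\), I would first dissolve the conditions \(p \equiv a \pmod q\) and \((\tfrac{\Delta}{p}) = 1\) by orthogonality of Dirichlet characters modulo \(q\) and the quadratic character of conductor dividing \(4\Delta\), reducing \(T_b(X)\) to a linear combination of character-twisted sums \(\sum_{p \leq X} \chi(p)\, e(b D(p)/p)\). Next, using the classical correspondence between roots of \(D^2 \equiv \Delta \pmod p\) and reduced integral binary quadratic forms \((a,b,c)\) of discriminant \(4\Delta\) with \(a = p\), a Farey-type rearrangement converts these sums into lattice-point sums on the quadric \(b^2 - 4ac = 4\Delta\) weighted by additive characters, which unfold into Kloosterman-type sums. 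The Kuznetsov trace formula then expresses \(T_b(X)\) spectrally in terms of Fourier coefficients of Maass cusp forms at the argument \(\Delta\), and the required cancellation is extracted using the spectral large sieve together with subconvexity for the relevant Hecke and Dirichlet \(L\)-functions.

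The principal technical obstacle is obtaining \(T_b(X) \ll X^{1-\delta}\) with uniformity in the character twists arising from the modulus \(q\); in particular, one must circumvent losses stemming from potential Landau--Siegel zeros of the \(L\)-functions attached to the quadratic character mod \(\Delta\) twisted by characters mod \(q\), and one must control the ramification of these characters jointly. This is precisely the refinement carried out by T\'{o}th beyond the original Duke--Friedlander--Iwaniec framework. Once the bound on \(T_b(X)\) is secured, the Siegel--Walfisz theorem (equivalently, Chebotarev applied to an appropriate ray class field) yields \(\pi_{P'}(X) \gg X/\log X\) whenever \(P'\) is infinite, and Weyl's criterion completes the proof.
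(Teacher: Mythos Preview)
The paper does not supply a proof of \cref{thm:toth}; it is quoted as a result of T\'{o}th~\cite{Toth-2000} and used as a black box (first to obtain \cref{thm:toth-quant}, and then in the proof of \cref{prop:quadratic}). There is therefore no argument in the paper to compare your proposal against.

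That said, your outline---Weyl's criterion, reduction to a Weyl-type sum over primes involving the square root of $\Delta$ modulo $p$, and then the Duke--Friedlander--Iwaniec spectral machinery with T\'{o}th's refinement for the arithmetic-progression constraint---is a faithful high-level description of how such equidistribution results are proved in the literature. It is, however, a sketch rather than a proof: the sentence ``the Kuznetsov trace formula then expresses $T_b(X)$ spectrally \dots\ and the required cancellation is extracted using the spectral large sieve together with subconvexity'' compresses essentially the entire analytic content of the argument into a single clause, and the handling of the character twists and level structure is where all the genuine work lies. If the goal were to actually reprove the theorem, each of those steps would need to be made precise; for the purposes of this paper, simply citing T\'{o}th is what the authors do and is appropriate.
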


In particular, in the situation above, for fixed $0 \leq \a < \b \leq 1$ and $\delta > 0$, for all sufficiently large $N$ there are $\gg N/\log N$ primes $p \in P$ with $N \leq p < (1+\delta)N$ such that $\rep(r \pm s \sqrt{\Delta})/p \in [\a,\b)$. 

\begin{corollary}\label{thm:toth-quant}
	Let $\Delta \in \NN$ be square-free, $q \in \NN$, $a \in \ZZ/q\ZZ$ and let $P$ be the set of primes $p \equiv a \pmod q$ such that $\bra{\frac{\Delta}{p}} = 1$. Let also $r,s \in \QQ$ with $s \neq 0$, $0 \leq \a < \b \leq  1$ and $\delta > 0$. If $P$ is infinite then there exist $N_0,c > 0$ such that for each $N \geq N_0$ there are at least $c N/\log N$ primes $p \in P$ such that $N \leq p < (1+\delta)N$ and $\rep(r + s \sqrt{\Delta})/p \in [\a,\b)$ or $\rep(r - s \sqrt{\Delta})/p \in [\a,\b)$. 
\end{corollary}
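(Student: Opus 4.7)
The plan is to combine T\'{o}th's theorem (\cref{thm:toth}) with the prime number theorem in arithmetic progressions, invoking the elementary inequality $\max \geq \tfrac{1}{2}\mathrm{sum}$ to convert a \emph{sum} estimate into a lower bound on a \emph{union} of events.

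First, I would observe that $P \setminus P'$ is finite: the only primes in $P$ at which $\rep(r \pm s\sqrt{\Delta})/p$ fails to be well defined are the (finitely many) primes dividing the denominators of $r$ and $s$. Consequently $\pi_{P'}(M) = \pi_P(M) - O(1)$ as $M \to \infty$, where $\pi_X(M)$ denotes the counting function of a set $X$ of primes. Since $P$ consists of the rational primes $p \equiv a \pmod{q}$ that split in $\QQ(\sqrt{\Delta})$, Chebotarev's theorem applied to the compositum $\QQ(\sqrt{\Delta},\zeta_q)$---equivalently, Dirichlet's theorem for primes in arithmetic progressions twisted by the Kronecker symbol of $\Delta$---gives
\begin{equation*}
\pi_{P}(M) \sim \frac{c_0 M}{\log M} \qquad (M \to \infty)
\end{equation*}
for a computable constant $c_0 > 0$ depending only on $\Delta$, $q$, and $a$. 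Subtracting this asymptotic at $M = N$ and $M = (1+\delta)N$ produces $\pi_{P'}((1+\delta)N) - \pi_{P'}(N) \sim c_0 \delta\, N/\log N$.

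Next, abbreviate $A_{\pm}(M) := \#\set{p \in P'}{p < M,\ \rep(r \pm s\sqrt{\Delta})/p \in [\alpha,\beta)}$. Unpacking the $2$-tuple uniform distribution statement in \cref{thm:toth} yields
\begin{equation*}
A_+(M) + A_-(M) = 2(\beta - \alpha)\, \pi_{P'}(M)(1 + o(1)) \qquad (M\to\infty).
\end{equation*}
Subtracting this identity at $M = N$ from its counterpart at $M = (1+\delta)N$ and inserting the short-interval estimate for $\pi_{P'}$ above, for all sufficiently large $N$ one obtains
\begin{equation*}
\sum_{\epsilon \in \{+,-\}} \bra{A_{\epsilon}((1+\delta)N) - A_{\epsilon}(N)} \geq (\beta - \alpha)\, c_0\, \delta\cdot \frac{N}{\log N}.
\end{equation*}

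Finally, the quantity $B(N)$ to be bounded below is the number of primes $p \in P'$ with $N \leq p < (1+\delta)N$ at which at least one of the two rep conditions holds; by inclusion, $B(N) \geq \max_{\epsilon}\bra{A_{\epsilon}((1+\delta)N) - A_{\epsilon}(N)}$. Since each summand in the previous display is a nonnegative count over the same short interval, the maximum is at least half the sum, and so $B(N) \geq \tfrac{1}{2}(\beta - \alpha) c_0 \delta\, N/\log N$ for all sufficiently large $N$; setting $c := \tfrac{1}{2}(\beta - \alpha) c_0 \delta$ completes the proof. The only mildly subtle point is this last step: \cref{thm:toth} provides uniform distribution of the \emph{pair}, so it controls only the sum $A_+ + A_-$, and the bound on the \emph{union} event must be extracted via the trivial $\max \geq \tfrac{1}{2}\mathrm{sum}$ inequality---no deeper combinatorial input is needed.
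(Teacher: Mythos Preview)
Your argument is correct and follows exactly the route the paper intends: the corollary is stated immediately after \cref{thm:toth} with no proof, prefaced only by ``In particular,'' so the paper treats it as a direct consequence of uniform distribution together with the prime number theorem in arithmetic progressions, which is precisely what you have written out. Your care in passing from the sum $A_+ + A_-$ to the union via $\max \geq \tfrac{1}{2}\,\mathrm{sum}$ is the natural way to handle the fact that \cref{thm:toth} only controls the pair jointly.
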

Let \(\Q^\times\) denote the multiplicative group of non-zero rational numbers.
For \(t = a/b\in\Q^\times\) with $a,b \in \ZZ$ coprime, $h_{\weil}(t)$ satisfies $h_{\weil}(t) = \max(\log |a|, \log |b|)$ cf.~\cite[pg.~167]{lang2002algebra}. 
\begin{proposition} \label{prop:heights}
The Weil height possesses the following properties properties.
\begin{enumerate}
    \item For \(\alpha_1,\alpha_2,\ldots, \alpha_k\in\QQ\), \(h_{\weil}(\alpha_1 \alpha_2 \cdots \alpha_k) \le h_{\weil}(\alpha_1) + h_{\weil}(\alpha_2) + \cdots + h_{\weil}(\alpha_k)\),
    \item \(h_{\weil}(\alpha^m) = |m|h_{\weil}(\alpha)\) for \(\alpha\in\QQ^\times\) and \(m\in\Z\), and
    \item     Let \(q\in\Q(x)\) be a rational function.  Then for \(\alpha\in\Q\), we have
        \(
            h_{\weil}(q(\alpha)) = \deg(q)h_{\weil}(\alpha) + O(1)
        \)
        where the implied constant depends only on \(q\). (Said constant can be explicitly estimated in terms of the heights of the coefficients defining \(q\).) Here the degree of the rational function \(q(x) = \hat{g}(x)/\hat{f}(x)\) where \(\hat{f}\) and \(\hat{g}\) are coprime polynomials is given by \(\deg(q) = \max\{\deg(f),\deg(g)\}\).
    \item We have \(2h_{\weil}(\alpha) \ge \sum_p |\nu_p(\alpha)| \log p\).
\end{enumerate}
\end{proposition}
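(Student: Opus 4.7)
The plan is to leverage the explicit identity $h_{\weil}(a/b) = \log\max(|a|,|b|)$ stated just above the proposition. Properties (i), (ii), (iv) reduce to elementary manipulations using unique factorisation in $\ZZ$, while (iii) is the functorial behaviour of the Weil height on $\mathbb{P}^1$ and contains the main content.

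For property (iv), write $\alpha = a/b$ with $\gcd(a,b) = 1$. Coprimality forces at most one of $\nu_p(a)$, $\nu_p(b)$ to be non-zero for each prime $p$, so $|\nu_p(\alpha)| = \nu_p(a) + \nu_p(b)$ and hence $\sum_p |\nu_p(\alpha)|\log p = \log|a| + \log|b| \le 2\max(\log|a|, \log|b|) = 2h_{\weil}(\alpha)$. For property (i), write each $\alpha_i = a_i/b_i$ in lowest terms. The product equals $(\prod_i a_i)/(\prod_i b_i)$, and after cancellation the numerator and denominator have absolute values at most $\prod_i |a_i|$ and $\prod_i |b_i|$ respectively. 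Then $h_{\weil}(\prod_i \alpha_i) \le \log\max(\prod_i|a_i|, \prod_i|b_i|) \le \sum_i \log\max(|a_i|,|b_i|) = \sum_i h_{\weil}(\alpha_i)$.

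For property (ii), coprimality of $a$ and $b$ is preserved under powers, so $\alpha^m = a^m/b^m$ is in lowest terms for $m > 0$ and yields $h_{\weil}(\alpha^m) = m\, h_{\weil}(\alpha)$. For $m < 0$ apply the same argument to $\alpha^{-1} = b/a$; the defining formula is symmetric in $a$ and $b$, whence $h_{\weil}(\alpha^{-1}) = h_{\weil}(\alpha)$ and the claim follows.

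For property (iii), the upper bound $h_{\weil}(q(\alpha)) \le \deg(q)\, h_{\weil}(\alpha) + O(1)$ follows from (i), (ii), and bounding $|P(a/b)|$ for polynomials $P \in \QQ[x]$ by clearing denominators: writing $P(x) = \sum_{j} c_j x^j$ of degree $D$, we have $b^{D} P(a/b) = \sum_j c_j a^j b^{D-j}$, so both $|b^D P(a/b)|$ and $|b^D|$ are at most a constant depending on $P$ times $\max(|a|,|b|)^D$. The matching lower bound is the main obstacle, since pure subadditivity cannot preclude massive cancellation between the numerator and denominator of $q(\alpha) = \hat g(a/b)/\hat f(a/b)$. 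The way around this is a resultant bound: the integer $\gcd\bigl(b^{\deg q}\hat{g}(a/b),\, b^{\deg q}\hat{f}(a/b)\bigr)$ divides a fixed power of the resultant $\mathrm{Res}(\hat f, \hat g)$, which is a non-zero integer precisely because $\hat f$ and $\hat g$ are coprime. This controls the loss from cancellation and yields $h_{\weil}(q(\alpha)) \ge \deg(q)\, h_{\weil}(\alpha) - O(1)$. The resulting statement is a classical fact in the height theory of $\mathbb{P}^1$; a complete argument can be found in the algebraic number theory reference cited just before the proposition.
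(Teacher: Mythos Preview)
Your proposal is correct. For property~(iv) your argument is essentially identical to the paper's: write $\alpha=a/b$ in lowest terms and bound $\log|a|+\log|b|\le 2\max(\log|a|,\log|b|)$. For properties~(i)--(iii) the paper simply cites the literature (Zannier) without giving any argument, whereas you supply the standard elementary proofs, including the resultant bound for the lower half of~(iii); so your write-up is strictly more detailed than the paper's own proof.
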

\begin{proof}
The first three properties are standard in the literature cf.~{\cite[Proposition 3.2]{zannier2014diophantine} and \cite[pg.~7]{zannier2018heights}}.
 The final property follows straightforwardly from the observation that \(\log|x| = \sum_p \nu_p(x)\log p\). 
 Suppose that \(\alpha = a/b\) where \(a,b\in\Z\) are coprime, then we have
    \begin{equation*}
        2 h_{\weil}(\alpha) = 2\max(\log|a|,\log|b|) \ge \log|a| + \log|b| = \sum_p |\nu_p(\alpha)| \log p,
    \end{equation*}
as desired.
\end{proof}

In the sequel, we call a $p$-adic number $\a = \sum_{k=0}^\infty \a^{(k)}p^k$ \emph{normal} if the sequence of \(p\)-adic digits $\seq[\infty]{\a^{(k)}}{k=0}$ is normal, i.e., if for each $\ell \geq 1$ and each $\ell$-length pattern $w \in \{0,1,\dots,p-1\}^\ell$, the set of positions $\{s \in \NN_0 \mid \a^{(s+k)} = w_k \text{ for all } 0 \leq k < \ell\}$, has density $p^{-\ell}$.

\section{Growth estimates for the Weil height of hypergeometric sequences with quadratic factors} \label{sec:growth}
\newcommand{\discr}{\operatorname{Disc}}

\begin{proposition}
\label{prop:quadratic}
	Let $\seq[\infty]{u_n}{n=0}$ be a regular hypergeometric sequence given by \eqref{eq:setup:def-u}. Suppose that each irreducible factor of $fg$ has degree at most two, and let $\cD \subset \NN$ denote the discriminants of the quadratic factors of $f g$. Suppose further that there is $\Delta \in \cD$ and a prime $p$ such that {the following conditions hold,}
\begin{enumerate}
\item \label[condition]{cond:quadraticA} $\bra{\frac{\Delta}{p}} = 1$, and
\item \label[condition]{cond:quadraticB} $\bra{\frac{\Delta'}{p}} = -1$ for all $\Delta' \in \cD \setminus \{\Delta\}$.
\end{enumerate}	
{Then we have $h_{\weil}(u_n) \gg n$.}
\end{proposition}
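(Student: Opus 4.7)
The plan is to leverage the single witness prime $p$ to produce a positive-density family of analogous primes $q$, to show that $|\nu_q(u_n)|\ge 1$ for $\gg n/\log n$ such primes $q$ in a window just above $n$, and then to sum these $q$-adic contributions via \cref{prop:heights}(4).

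I would first translate \cref{cond:quadraticA} and \cref{cond:quadraticB} into congruence conditions on $p$ modulo $4\prod_{\Delta'\in\cD}|\Delta'|$ using quadratic reciprocity. The set $P$ of primes satisfying the same congruences is a nonempty union of arithmetic progressions, and by the quantitative T\'oth bound \cref{thm:toth-quant}, for any $\delta>0$ the window $P\cap(2n,(2+\delta)n]$ contains $\gg n/\log n$ primes, with $\mathrm{rep}_q(\beta)/q$ uniformly distributed in $[0,1]$ for any fixed $\beta\in\QQ(\sqrt\Delta)\setminus\QQ$ as $q$ ranges over $P$.

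For $q\in P$ with $q>2n+B$, where $B$ is a constant depending on the sequence, I would write $\nu_q(u_n)$ explicitly: each quadratic factor of $fg$ with discriminant $\Delta'\ne\Delta$ is irreducible mod $q$ (contributing $0$), while every quadratic factor with discriminant $\Delta$ splits over $\ZZ_q$ by Hensel, so, together with the linear factors,
\begin{equation*}
   \nu_q(u_n)=M-E_q,
\end{equation*}
where $M=|R_q(g)|-|R_q(f)|$ is the signed difference of $\QQ_q$-root multisets (constant in $q\in P$) and $E_q$ is the signed count of roots $\gamma$ with $\mathrm{rep}_q(\gamma)>n$. For each conjugate pair $\beta,\overline\beta$ coming from a quadratic factor of discriminant $\Delta$, the relation $\mathrm{rep}_q(\beta)+\mathrm{rep}_q(\overline\beta)\equiv -(\beta+\overline\beta)\pmod q$ forces at most one of the two associated indicators $\mathbf{1}[\mathrm{rep}_q(\cdot)\le n]$ to hold, so that pair's contribution to $E_q$ lies in $\{0,1\}$. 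With $\delta$ chosen small, each individual error indicator $\mathbf{1}[\mathrm{rep}_q(\gamma)>n]$ occurs with probability $O(\delta)$ by \cref{thm:toth-quant}. Thus if $M\ne 0$, the typical prime $q$ in the window has $E_q=0$ and $|\nu_q(u_n)|\ge|M|\ge 1$, giving $\gg n/\log n$ good primes. If instead $M=0$, I would isolate one conjugate pair $\beta,\overline\beta$ coming from a quadratic factor of discriminant $\Delta$ (which exists since $\Delta\in\cD$), apply \cref{thm:toth-quant} to show that both values of its indicator $\eta$ occur on $\gg n/\log n$ primes, and note that for each fixed configuration of the remaining contributions at least one value of $\eta$ leaves $E_q\ne 0$.

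Summing these local bounds via \cref{prop:heights}(4),
\begin{equation*}
    2 h_{\weil}(u_n)\;\ge\;\sum_{q}|\nu_q(u_n)|\log q\;\gg\;\frac{n}{\log n}\cdot\log n=n,
\end{equation*}
which is the desired lower bound. The main obstacle is the case $M=0$: one must ensure that the indicator $\eta$ attached to $\beta,\overline\beta$ genuinely decouples from the other contributions to $E_q$. I would secure this by combining \cref{thm:toth-quant} with a joint equidistribution statement for the residues of several quadratic roots of discriminant $\Delta$, together with standard uniformity of $\mathrm{rep}_q(a/b)/q$ over primes $q$ in arithmetic progressions, so that both values of $\eta$ persist on linearly-many primes regardless of the fixed configuration of the other residues.
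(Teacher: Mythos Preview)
There is a genuine gap in the $M=0$ case. The joint equidistribution you hope for does not hold: every quadratic root with discriminant $\Delta$ is of the form $r_j \pm s_j\sqrt{\Delta}$, so modulo $q$ its representative is an affine function $r_j \pm s_j D$ of the \emph{single} value $D$ with $D^2 \equiv \Delta$. Once $D$ is known, all the indicators are determined simultaneously; you cannot vary your chosen $\eta$ while keeping the remaining contributions fixed. (As a smaller issue, with $q\in(2n,(2+\delta)n]$ one has $\Pr[\mathrm{rep}_q(\gamma)>n]\approx 1/2$, not $O(\delta)$; and $\mathrm{rep}_q(a/b)/q$ is not equidistributed over primes in a progression but concentrates near the values $k/b$.) A telling symptom is that you never use the regularity hypothesis. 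Without it the conclusion is false: for $f=x^2-2$ and $g=(x-1)^2-2$ one has $u_n=-2/(n^2-2)$ and $h_{\weil}(u_n)=O(\log n)$, yet nothing in your argument distinguishes this from a regular instance.

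The paper's proof proceeds differently and uses regularity essentially. It takes primes $p\approx n/\delta$ with $\delta$ very small, so that $[1,n]$ occupies only a $\delta$-fraction of $[0,p)$; with $\delta<1/M$ the rational roots then automatically satisfy $\mathrm{rep}_p(t_i)>n$. Among the quadratic roots $r_j\pm s_j\sqrt{\Delta}$ it singles out the one with $s_1$ minimal and uses \cref{thm:toth-quant} to force $\mathrm{rep}_p(r_1+s_1\sqrt{\Delta})\in((1-\varepsilon)n,n)$. The crucial step is the algebraic \cref{lem:approx-eq}: if any other quadratic root also had representative at most $n<\delta p$, then (after integer shifts) the two roots would be positive-integer multiples of a common element of $\QQ(\sqrt{\Delta})$, and comparing the multipliers against the minimality of $s_1$ and the narrow window $((1-\varepsilon)n,n)$ forces the roots to coincide up to an integer shift, contradicting regularity. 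Thus exactly one quadratic factor contributes and $\nu_p(u_n)\neq 0$ directly, with no case split on $M$.
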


\begin{remark}\label{rmk:re}
	In the situation of \cref{prop:quadratic}, let $p_1,p_2,\dots,p_r$ be the list of all primes dividing at least one of $\Delta \in \cD$. Note that for any $\epsilon_1,\epsilon_2,\dots,\epsilon_r \in \{0,1\}$ there exist infinitely many primes $p$ such that $\bra{\frac{p_i}{p}} = (-1)^{{\epsilon_i}}$ for all $1 \leq i \leq {r}$. %
    Indeed, it follows from quadratic reciprocity that there is a residue $p_0$ modulo $M := 4p_1 p_2 \cdots p_{{r}}$, %
    coprime to $M$, such that for each prime $p \equiv p_0 \pmod M$ we have $\bra{\frac{p_i}{p}} = \epsilon_i$ for all $1 \leq i \leq {r}$, %
    and existence of infinitely many such primes $p$ follows from Dirichlet's theorem. 
    For $\Delta \in \cD$, let $\delta^{(\Delta)} \in \{0,1\}^r$ be the vector given by $\delta^{(\Delta)}_j = 1$ if $p_j \mid \Delta$ and $\delta^{(\Delta)}_j = 0$ otherwise. 
    Bearing in mind the above discussion,	we see that for each $\Delta \in \cD$ the following are equivalent:
\begin{enumerate}
\item there exists at least one prime $p$ which satisfies \cref{cond:quadraticA,cond:quadraticB} in \cref{prop:quadratic};
\item there exists infinitely many primes $p$ which satisfies \cref{cond:quadraticA,cond:quadraticB} in \cref{prop:quadratic};
\item there exists $\epsilon \in \{0,1\}^r$ such that $\delta^{(\Delta)} \cdot \epsilon \equiv 0 \pmod 2$ and $\delta^{(\Delta')} \cdot \epsilon \equiv 1 \pmod 2$ for all $\Delta' \in \cD \setminus \{\Delta\}$.
\end{enumerate}
In particular, given a regular hypergeometric sequence $\seq[\infty]{u_n}{n=0}$ as in the above setting, verifying whether the conditions in \cref{prop:quadratic} {are satisfied by \(\seq[\infty]{u_n}{n=0}\)} is reduced to simple linear algebra.
From the observation in \cref{ex:Deltaconditions} (below), any regular hypergeometric sequence in class \(\mathscr{C}\) satisfies the conditions in \cref{prop:quadratic}.
\end{remark}

\begin{example} 
\label{ex:Deltaconditions}
Let \(\Delta_1\) and \(\Delta_2\) be integers such that $\Delta_1,\Delta_2$ and $\Delta_1 \Delta_2$ are not squares. Then \cref{cond:quadraticA,cond:quadraticB} in \cref{prop:quadratic} are satisfied if $\Delta_1 \in \cD \subset \{\Delta_1,\Delta_2,\Delta_1\Delta_2\}$. Moreover, \cref{cond:quadraticA,cond:quadraticB} are automatically satisfied if $\Delta_1,\Delta_2$ and $\Delta_1 \Delta_2$ are square-free.

More generally, let $\Delta_1,\Delta_2,\dots,\Delta_m$ be integers such that no non-empty product $\prod_{i \in I} \Delta_i$ with $\emptyset \neq I \subseteq \{1,2,\dots,m\}$ is a square. Then the condition in \cref{prop:quadratic} is satisfied if $\Delta_1 \in \cD$ and
\[ 
\cD \subset \set{\prod_{i \in I} \Delta_i}{ \emptyset \neq I \subset \{1,2,\dots,m\},\ {\#(I \setminus \{1\})} \equiv 1 \pmod 2}.
\]
To see this, it is enough to pick a prime $p$ such that $\bra{\frac{\Delta_1}{p}} = 1$ and $\bra{\frac{\Delta_i}{p}} = -1$ for $2 \leq i \leq m$. 
\end{example} 

It is a standard observation
that representations of non-integer rational numbers modulo large primes are never too small. Indeed, if $A/B$ ($A \in \ZZ,\ B \in \NN$) is a representation of a non-integer rational number in reduced form (meaning that $\gcd(A,B) = 1$ and $B \geq 2$) then for sufficiently large primes $p$, {there exists a positive constant \(\varepsilon(A,B)\)} such that
\begin{equation*}
	\rep\bra{\frac{A}{B}} = \frac{A+ip}{B} \geq \frac{p}{B}- {\varepsilon(A,B)} %
\end{equation*}
where $1 \leq i < B$ is specified by $A+ip \equiv 0 \pmod B$. %
We will need an analogous statement concerning elements of quadratic extensions of $\QQ$.

\begin{lemma}\label{lem:approx-eq}
Let $\Delta \in \NN$ be a square-free integer and $p$ a prime such that $\bra{\frac{\Delta}{p}} = 1$.
For some \(C>0\), let 
    $r_1,r_2,s_1,s_2 \in \QQ$ be rational numbers with Weil height at most $C$.
If \(\delta>0\),
\begin{equation}\label{eq:680:99}
	\rep(r_1 + s_1 \sqrt{\Delta}) < \delta p,	\quad \text{and} \quad \rep(r_2 + s_2 \sqrt{\Delta}) < \delta p,
\end{equation}
then there exist $r_1',r_2',r_0,s_0 \in \QQ$ with $r_1' - r_1, r_2' - r_2 \in \ZZ$ and $A_1,A_2 \in \NN$ with $\gcd(A_1,A_2) = 1$ such that
\begin{align*}
r_1' + s_1 \sqrt{\Delta} &= A_1(r_0+s_0\sqrt{\Delta}), &&&
r_2' + s_2 \sqrt{\Delta} &= A_2(r_0+s_0\sqrt{\Delta}), \\
\rep(r_1' + s_1 \sqrt{\Delta}) &= A_1 \rep(r_0+s_0\sqrt{\Delta}), &&&
\rep(r_2' + s_2 \sqrt{\Delta}) &= A_2 \rep (r_0+s_0\sqrt{\Delta}).
\end{align*}
\end{lemma}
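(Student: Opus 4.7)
The plan is to identify natural candidates for $A_1, A_2, s_0$ from the pair $(s_1, s_2)$ alone, reduce the entire existence claim to the single integrality fact $A_2 r_1 - A_1 r_2 \in \ZZ$, and then use the $p$-adic smallness hypothesis together with the height bounds to verify this integrality.

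\emph{Setup.} Write $s_1/s_2 = A_1/A_2$ in lowest terms with $A_1, A_2 \in \NN$ coprime (the cases in which this is ill-defined are either trivial or forbidden by the $A_i \in \NN$ convention), and set $s_0 := s_1/A_1 = s_2/A_2$. The algebraic identity
\[
A_2(r_1 + s_1 \sqrt{\Delta}) - A_1(r_2 + s_2 \sqrt{\Delta}) = A_2 r_1 - A_1 r_2 =: R
\]
exhibits $R$ as a purely rational number congruent modulo $p$ to the integer $M := A_2 \rep(r_1 + s_1 \sqrt{\Delta}) - A_1 \rep(r_2 + s_2 \sqrt{\Delta})$, which satisfies $|M| \leq (A_1 + A_2)\delta p$ by the smallness hypothesis \eqref{eq:680:99}.

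\emph{Showing $R \in \ZZ$.} From $h_{\weil}(s_i) \leq C$ we obtain $A_1, A_2 \leq e^{2C}$, and by \cref{prop:heights} the height of $R$ is bounded in terms of $C$. Writing $R = a/b$ in lowest terms with $p \nmid b$, we have $|a|, b \leq e^{O(C)}$. The congruence $R \equiv M \pmod p$ yields $p \mid (a - bM)$; provided $\delta$ is sufficiently small and $p$ sufficiently large in terms of $C$, the bound
\[
|a - bM| \leq |a| + b|M| \leq e^{O(C)}(1 + \delta p) < p
\]
forces $a - bM = 0$, whence $R = M \in \ZZ$. This is the crux of the argument.

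\emph{Assembling $r_1', r_2', r_0$.} Since $\gcd(A_1, A_2) = 1$ and $R \in \ZZ$, Bezout's identity furnishes $n_1, n_2 \in \ZZ$ with $A_2 n_1 - A_1 n_2 = -R$. Setting $r_i' := r_i + n_i$ gives $A_2 r_1' = A_1 r_2'$, so $r_0 := r_1'/A_1 = r_2'/A_2 \in \QQ$ is well-defined and the factorisations $r_i' + s_i\sqrt{\Delta} = A_i(r_0 + s_0\sqrt{\Delta})$ hold by construction. The solution $(n_1, n_2)$ is unique up to adding $(A_1 k, A_2 k)$ for $k \in \ZZ$, which translates to shifting $r_0$ by the integer $k$; since such shifts cycle $\rep(r_0 + s_0\sqrt{\Delta})$ through all residues $\{0, 1, \ldots, p-1\}$, we may insist on $\rep(r_0 + s_0\sqrt{\Delta}) < p/\max(A_1, A_2)$. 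For such a choice $A_i \rep(r_0 + s_0\sqrt{\Delta})$ lies in $[0, p)$ and is congruent modulo $p$ to $r_i' + s_i\sqrt{\Delta}$, so it equals $\rep(r_i' + s_i\sqrt{\Delta})$, as required. The main obstacle is the integrality step: we must simultaneously control the height of $R$, the size of $M$ relative to $p$, and the implicit threshold on $p$ dictated by those heights.
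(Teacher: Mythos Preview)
Your proof is correct and follows essentially the same strategy as the paper's: eliminate $\sqrt{\Delta}$ via an integer linear combination, convert ``small $\rep$ modulo $p$'' into an exact integrality statement using the height bounds, and then assemble $r_0$ and $r_i'$. The only difference is organizational---the paper applies the Bezout combination $(B_2,-B_1)$ first to isolate $s_0\sqrt{\Delta}+r_0$ and then shows each $r_i - A_i r_0 \in \ZZ$, whereas you apply the combination $(A_2,-A_1)$ first to isolate the single rational $R = A_2 r_1 - A_1 r_2$ and invoke Bezout only afterwards to build $r_0$; the two routes are equivalent, since in the paper's notation $r_i - A_i r_0 = -B_i R$.
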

\begin{proof}
Let $A_1,A_2 \in \ZZ$ be integers such that $\gcd(A_1,A_2) = 1$ and $A_2 s_1 = A_1 s_2$. For the ease of exposition, let us suppose that $A_1,A_2 \neq 0$ (the argument simplifies otherwise) and put $s_0  = s_1/A_1 = s_2/A_2$. Thus, we have
\begin{equation}\label{eq:680:00}
	 \rep\bra{A_1(r_1/A_1 + s_0 \sqrt{\Delta})} < \delta p,
	\quad \text{and} \quad \rep\bra{A_2(r_2/A_2 + s_0 \sqrt{\Delta})} < \delta p.
\end{equation}
    Let $B_1,B_2 \in \ZZ$ such that $A_1 B_2 - B_1 A_2 = 1$. By taking a linear combination of the two conditions in \eqref{eq:680:00} with weights $B_1,B_2$ we conclude that
\begin{equation}\label{eq:680:10}
	 \rep\bra{ s_0 \sqrt{\Delta} + r_0 } \ll_C \delta p,
	 \quad \text{or} \quad 
	 \rep\bra{ -s_0 \sqrt{\Delta} -r_0 }  \ll_C \delta p,
\end{equation}
for some rational number $r_0 \in \QQ$ with Weil height $O(C)$. 
For concreteness, assume that the first condition in \eqref{eq:680:10} holds.
Substitution into \eqref{eq:680:00} leads to:
\begin{equation*} %
	 \rep\bra{A_1(r_1/A_1 - r_0)}  \ll_C \delta p,
	 \quad \text{and} \quad  \rep\bra{A_2(r_2/A_{{2}} - r_0)} \ll_C \delta p.
\end{equation*}
Picking sufficiently small $\delta$ we can ensure that $p$ is large as a function of $C$, and hence the preceding asymptotic bound implies that 
\begin{equation*} %
	 r_1/A_1 - r_0 = m_1/A_1 \quad \text{and} \quad r_{{2}}/A_{{2}} - r_0 = m_2/A_{{2}},
\end{equation*}
for some $m_1,m_2 \in \ZZ$. Put $r_1' = r_1 - m_1$ and $r_2' = r_2 - m_2$. We have 
	\( r_1'/A_1 = r_2'/A_2 = r_0 \),
meaning that 
\begin{equation} \label{eq:680:50}
	 r_1' + s_1 \sqrt{\Delta} = A_1 (r_0 + s_0 \sqrt{\Delta}),
	 \quad \text{and} \quad r_2' + s_2 \sqrt{\Delta} = A_2 (r_0 + s_0 \sqrt{\Delta}).
\end{equation}
Bearing in mind \eqref{eq:680:10} and assuming that $\delta$ is sufficiently small, we conclude from the first equation in \eqref{eq:680:50} that
\begin{equation*} %
	\rep\bra{r_1' + s_1 \sqrt{\Delta} } = 
	\begin{cases}
	A_1 \rep\bra{r_0 + s_0 \sqrt{\Delta}} & \text{if } A_1 > 0;\\
	p- \abs{A_1} \rep\bra{r_0 + s_0 \sqrt{\Delta}} & \text{if } A_1 < 0.	
	\end{cases}
\end{equation*}
Comparing this with \eqref{eq:680:99} we see that, necessarily, $A_1 > 0$. Applying the same reasoning to the second equation in \eqref{eq:680:50} completes the argument.
\end{proof}

\begin{proof}[Proof of \cref{prop:quadratic}]

Let us decompose
\begin{equation}\label{eq:427:00}
	r(x) = \frac{g(x)}{f(x)} = \prod_{i=1}^I \ell_i(x)^{k_i} \prod_{j=1}^J q_j(x)^{m_i} r_0(x),
\end{equation}
where the exponents $m_i,k_j$ are integers, $\ell_i$ are monic linear polynomials, $q_j$ are monic quadratic polynomials with discriminant $\Delta$, and $r_0$ is a rational function whose numerator and denominator are products of quadratic polynomials with discriminants different from $\Delta$. 
Hereafter we freely assume that \(\Delta\) is square-free.  This assumption comes at no cost because the Legendre symbol is completely multiplicative in the top argument \cite[Theorem 9.3]{apostol2010number}.
For each $1 \leq i \leq I$ let $t_i$ be the root of $\ell_i$, meaning that 
\begin{equation*}
	\ell_i(x) = x - t_i.
\end{equation*}
Similarly, for each $1 \leq j \leq J$ let $r_j \pm s_j \sqrt{\Delta}$ ($s_j > 0$) be the roots of $q_j$, meaning that 
\begin{equation*}
	q_j(x) = \bra{x-r_j-s_j \sqrt{\Delta}}\bra{x-r_j + s_j \sqrt{\Delta}}.
\end{equation*}

Note that for each prime $p$ that satisfies \cref{cond:quadraticA,cond:quadraticB} in \cref{prop:quadratic} and which is larger than all primes appearing in denominators of $t_i$ ($1 \leq i \leq I$) and $r_j,s_j$ ($1 \leq j \leq J$), each of the linear factors $\ell_i$ has one root modulo $p$, each of the quadratic factors $q_j$ has two roots modulo $p$, and the numerator and denominator of $r_0$ have no roots modulo $p$.
Reordering the quadratic factors $q_j$ if necessary, we may further assume that $s_1 \leq s_j$ for all $1 \leq j \leq J$. 

Let $M$ be a positive integer that is sufficiently {multiplicatively rich} (i.e., divisible by a suitably constructed integer $M_0$) such that
\begin{enumerate}
\item there is a residue $p_0$, coprime to $M$, such that all primes $p$ with $p \equiv p_0 \pmod{M}$ satisfy \cref{cond:quadraticA,cond:quadraticB} in \cref{prop:quadratic} (cf.~\cref{rmk:re});
\item $Mt_i$ ($1 \leq i \leq I$) and $M r_j$, $M s_j$ ($1 \leq j \leq J$) are integers.
\end{enumerate} 
Let $\delta,\e > 0$ be small constants, to be specified in the course of the argument. We will be interested in primes $p$ satisfying the following conditions:
\begin{enumerate}[label = C.\roman*.]
\item \label[condition]{it:61:B} $p \equiv p_0 \pmod M$;
\item \label[condition]{it:61:A} $n < \delta p < (1+\e/3)n$;
\item \label[condition]{it:61:C} $\rep(r_1 + s_1 \sqrt{\Delta}) \in ((1-\e)n, n)$.
\end{enumerate}
Note that \cref{it:61:A} is equivalent to
\(
	p \in \big(\tfrac{1}{\delta}n,\tfrac{1+\e}{\delta}n\big)
\)
and that \cref{it:61:C} is implied by 
\[
	\rep(r_1 + s_1 \sqrt{\Delta}) \in \big( (1-\e)\delta p,  \big(1-\tfrac{2}{3}\e\big) \delta p\big).
\]
Hence, it follows from \cref{thm:toth-quant} that the number of primes satisfying \cref{it:61:A,it:61:B,it:61:C}, for fixed $\delta,\e > 0$, is $\gg n/\log n$ (here the implied constant depends on $\delta$ and $\e$).
We plan to show that for each prime $p$ satisfying \cref{it:61:B,it:61:A,it:61:C} we have $\nu_p(u_n) \neq 0$. Once this is accomplished, we obtain
\begin{equation*}
	h_{\weil}(u_n) \ge \frac{1}{2} {\sum_{p} |{\nu_p(u_n)}| \log p} \gg \frac{n}{\log n} \log n = n.
\end{equation*}
Here the first inequality follows directly from \cref{prop:heights}. 
We can absorb the factor of \(1/2\) into the implied asymptotic constant in order to finish the argument. 

We will consider contributions to $\nu_p(u_n)$ coming from different terms in \eqref{eq:427:00} separately. Since $\nu_p(r_0(m)) = 0$ for all $m \in \ZZ$, there is no contribution from $r_0$:
	\(\nu_p\bra{ \prod_{m=1}^n r_0(m) } = 0\).
 For each $1 \leq i \leq I$ we can explicitly describe the root of $\ell_i$ modulo $p$:
\begin{equation*}
	\rep(t_i) = \frac{T_i}{M}(p-p_0),
\end{equation*}
where $T_i$ is an integer independent of $p$. Thus, as long as $\delta < 1/M$, for sufficiently large $n$ we have $\rep(t_i) > n$ and consequently
	\(\nu_p\bra{ \prod_{m=1}^n \ell_i(m) } = 0\).
\cref{it:61:C} guarantees that $q_1$ has (at least) one root modulo $p$ that is less than $n$ and consequently
	\(\nu_p\bra{ \prod_{m=1}^n q_1(m) } \geq 1\).
(In fact, it is not hard to rule out the possibility that the other root of $q_1$ is also less than $n$, but we will not need this.)

It remains to show that for each $2 \leq j \leq J$, the contribution from $q_j$ to $\nu_p(u_n)$ is zero. For the sake of contradiction, suppose that for some $2 \leq j \leq J$ we have
	\(\rep\bra{r + s \sqrt{\Delta}} \leq n\),
where $r = r_j$ and $s = s_j$ or $s = -s_j$. This in turn implies that 
\begin{equation}\label{eq:690:21}
	\rep\bra{r + s \sqrt{\Delta}} \leq \delta p.
\end{equation}
If $\delta$ was chosen sufficiently small (as a function of $r_1,s_1,r_j,s_j$), then we conclude from \cref{lem:approx-eq} that $r+s\sqrt{\Delta}$ and $r_1 +s_1 \sqrt{\Delta}$ are both multiples of the same element of $\QQ(\sqrt{\Delta})$ with a small representative modulo $p$ (perhaps after an integer shift). In other words, we can find coprime positive integers $A,A_1$ and rational numbers $r',r_1',r^*,s^*$ such that $r' - r, r_1' - r_1$ are integers, and
\begin{align*}
	r' + s \sqrt{\Delta} &= A (r^* + s^*\sqrt{\Delta}),&
	r'_1 + s_1 \sqrt{\Delta} &= A_1 (r^* + s^*\sqrt{\Delta}),	\\
	\rep(r' + s \sqrt{\Delta}) &= A \rep(r^* + s^*\sqrt{\Delta}),&
	\rep(r'_1 + s_1 \sqrt{\Delta}) &= A_1 \rep(r^* + s^*\sqrt{\Delta}).	
\end{align*}
Since $s_1 \leq s_j$, we have $A_1 \leq A$. On the other hand, \cref{it:61:C} combined with \eqref{eq:690:21} implies that $A \leq (1+\e)A_1$. Assuming that $\e$ was chosen sufficiently small, this is only possible if $A = A_1$ and hence $s = s_1 = s_j$. It follows that 
\begin{align*}
	r' + s \sqrt{\Delta} &= A (r^* + s^*\sqrt{\Delta}) = 
	r'_1 + s_1 \sqrt{\Delta}.
\end{align*}
However, this implies that $r' - r_1$ is an integer, contradicting the assumption that $\seq[\infty]{u_n}{n=0}$ was regular.
\end{proof}

Our main result in this section, \cref{thm:main} (restated below), follows directly from \cref{prop:quadratic,prop:heights}.
\theoremmain*
\begin{proof}
Suppose that \(\seq[\infty]{u_n}{n=0}\) belongs to class \(\mathscr{C}\) and write \(u_n = q_n \tilde{u}_n\) where \(q\in\Q(x)\) is a rational function and \(\seq[\infty]{\tilde{u}_n}{n=0}\) is a regular hypergeometric sequence in class \(\mathscr{C}\). 
By the first two properties in \cref{prop:heights}, we have 
    \begin{equation*}
        h_{\weil}(u_n) \ge h_{\weil}(\tilde{u}_n) - h_{\weil}(1/q_n) = h_{\weil}(\tilde{u}_n) - h_{\weil}(q_n).
    \end{equation*}
The desired result then straightforwardly follows from two observations.
First, we can apply the growth estimate \(h_{\weil}(\tilde{u}_n) \gg n\) in \cref{prop:quadratic} since \(\seq[\infty]{\tilde{u}_n}{n=0}\) is regular and in class \(\mathscr{C}\).  Second, we can estimate \(h_{\weil}(q(n)) = \deg(q)\log n + O(1)\) from the third property in \cref{prop:heights}.
\end{proof}

\begin{remark}
    It is worth noting that the growth estimate in \cref{thm:main} still holds when we relax the assumption that \(\seq[\infty]{u_n}{n=0}\) is a member of \hyperlink{defin:C}{class~\(\mathscr{C}\)} to the assumptions present in \cref{prop:quadratic}.
    Our focus on the class of hypergeometric sequences with quadratic parameters, is motivated by the particular attention paid to this class in the literature~ \cite{hongarxiv2016, kenison2023membership, kenison2024threshold}.
    We shall return to this class in \cref{sec:membership} to discuss the Membership Problem in this setting.
\end{remark}

\section{Divergence of \texorpdfstring{$p$}{p}-adic valuations}\label{sec:divergence}

Let \(\seq[\infty]{u_n}{n=0}\) be a hypergeometric sequence satisfying \eqref{eq:setup:def-u} and $p$ a Hensel prime for $fg$. 
Following~\cite[Section~4]{kenison2023membership}, we say that $\seq[\infty]{u_n}{n=0}$ is \emph{$p$-symmetric} if $f$ and $g$ have the same number of roots in $\mathbb{F}_p$, and %
\emph{$p$-asymmetric} otherwise. 
In~\cite[Lemma~9]{kenison2023membership} it is shown that if $\seq[\infty]{u_n}{n=0}$ is $p$-asymmetric then $\nu_p(u_n) \to \pm \infty$ as $n \to \infty$, and the rate of divergence can be quantified.

In this section we shall first recall the statement of Lemma~9 in~\cite{kenison2023membership};
we then prove a converse of said result subject to a normality assumption on the roots of \(fg\) (\cref{lem:normal}).
For restricted classes of hypergeometric sequences, we also prove that \(p\)-symmetry holds if and only if certain properties of the Galois group associated with \(fg\) hold (\cref{prop:large:Galois,prop:quadratic,prop:cyclic}).

Let us recall the following result on the divergence of \(p\)-adic valuations for \(p\)-asymmetric hypergeometric sequences.
\begin{lemma}[{\cite[Lemma~9]{kenison2023membership}}]
\label{lem:pasymmetric-divergence}
Let \(\seq[\infty]{u_n}{n=0}\) be a hypergeometric sequence that satisfies \eqref{eq:setup:def-u}
and suppose that \(\seq[\infty]{u_n}{n=0}\) is \(p\)-asymmetric for some prime \(p\).
Let \(m_f\) denote the number of roots of \(f\) modulo \(p\) and define \(m_g\) similarly.
Then
    \begin{equation*}
        |\nu_p(u_n)| = \frac{|m_g - m_f|n}{p-1} + O(\log n)
    \end{equation*}
    where the implied constant depends only on \(fg\) and \(p\). In particular, $\nu_p(u_n) \to \infty$ as $n \to \infty$.
\end{lemma}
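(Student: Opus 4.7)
The plan is to read off $\nu_p(u_n)$ from the product formula \eqref{eq:setup:def-u-prod}, which gives
\[
\nu_p(u_n) \;=\; \nu_p(u_0) \;+\; \sum_{m=1}^n \nu_p\bigl(g(m)\bigr) \;-\; \sum_{m=1}^n \nu_p\bigl(f(m)\bigr).
\]
The aim is then to show each of the two sums on the right equals $m_f \cdot n/(p-1) + O(\log n)$ (respectively $m_g \cdot n/(p-1) + O(\log n)$); subtracting and invoking $m_f \neq m_g$ yields the claim.

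The Hensel-prime hypothesis is what makes these sums tractable: each of the $m_f$ simple roots of the reduction $\bar f \in \FF_p[x]$ lifts uniquely, via Hensel's lemma, to a root $\alpha_i \in \ZZ_p$ of $f$, and the residual factor of $f$ has unit reduction modulo $p$ at every integer argument. Hence $\sum_{m=1}^n \nu_p(f(m)) = \sum_{i=1}^{m_f} \sum_{m=1}^n \nu_p(m - \alpha_i)$, and the inner sum admits a Legendre-style estimate:
\[
\sum_{m=1}^n \nu_p(m - \alpha) \;=\; \sum_{k \ge 1} \#\bigl\{1 \le m \le n : m \equiv \alpha \pmod{p^k}\bigr\} \;=\; \sum_{k=1}^{\lfloor \log_p n \rfloor} \bigl(\tfrac{n}{p^k} + O(1)\bigr) \;=\; \frac{n}{p - 1} + O(\log n),
\]
because only the $O(\log n)$ terms with $p^k \le n$ contribute nontrivially. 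Summing over the $m_f$ roots of $f$, symmetrically over the $m_g$ roots of $g$, and subtracting yields $\nu_p(u_n) = (m_g - m_f)\, n/(p-1) + O(\log n)$, whence the lemma follows upon taking absolute values.

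I expect the main obstacle to be the careful bookkeeping of the residual factor of $f$ (and of $g$): roots lying in proper algebraic extensions of $\QQ_p$, or with negative $p$-adic valuation, could in principle produce unwanted linear-in-$n$ contributions. The Hensel-prime hypothesis is tailored precisely to exclude such pathologies — after Hensel-lifting is performed, the residual factor is forced to have unit reduction at every integer, so its contribution to $\nu_p$ vanishes — but this has to be unpacked carefully before the clean counting argument above applies. Once that is verified, the remainder of the proof is routine.
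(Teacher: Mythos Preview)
The paper does not supply its own proof of this lemma; it is quoted verbatim from \cite[Lemma~9]{kenison2023membership}. Your argument is the standard one and is essentially correct --- indeed, the same Hensel-lift-then-count computation appears in the paper in the proof of \cref{lem:normal} (there specialised to $n = p^s$). The one step that deserves slightly more care is the truncation of the Legendre-type sum at $k = \lfloor \log_p n \rfloor$: terms with $p^k > n$ are not automatically zero, since some $m \in \{1,\dots,n\}$ may have $\nu_p(m-\alpha_i)$ exceeding $\log_p n$. Their total contribution, however, is at most $\max_{1 \le m \le n} \nu_p(m-\alpha_i)$, and since $f(m) \in \QQ$ has numerator of size $O(n^{\deg f})$ one has $\nu_p(f(m)) = O(\log n)$, forcing each $\nu_p(m-\alpha_i) = O(\log n)$ as well. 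With this addendum the tail is absorbed into the error term and your proof goes through.
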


Recall that Borel's conjecture predicts that every irrational algebraic number is normal~\cite{borel1909, borel1950}.
In the sequel, we shall refer to the following \(p\)-adic prediction. 
This prediction is a weak version of the \(p\)-adic version of Borel's conjecture in~\cite[Conjecture 1.1]{sun2010borel}.
    \begin{conjecture} \label{conj:borel}
        Let \(\alpha\in\Z_p\) be an irrational algebraic number.  Then \(\alpha\) is normal in base \(p\).
    \end{conjecture}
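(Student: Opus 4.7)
The statement is a $p$-adic analogue of Borel's classical conjecture that every irrational algebraic real number is normal in every integer base. No instance of the real Borel conjecture is known for any algebraic irrational of degree at least two, and the $p$-adic version appears to be of comparable difficulty; accordingly, my ``plan'' is necessarily speculative rather than a genuine proof route.

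The natural framework is to translate normality into counting Hensel lifts. Suppose $\alpha \in \ZZ_p$ has minimal polynomial $f \in \ZZ[x]$ of degree $d \geq 2$, and that $p$ does not divide the discriminant of $f$. Then for each $N$ the truncation $\alpha_N = \sum_{k=0}^{N-1}\alpha^{(k)} p^k$ is the unique lift of $\alpha^{(0)} \in \FF_p$ to a root of $f$ modulo $p^N$, guaranteed by Hensel's lemma. The occurrence of a pattern $w \in \{0,\ldots,p-1\}^\ell$ at position $s$ in the expansion of $\alpha$ corresponds to a congruence $\alpha \equiv c_w \pmod{p^{s+\ell}}$, so normality asks that as $s$ varies the ``shifted'' residues $(\alpha - \alpha_s)/p^s \bmod p^\ell$ equidistribute in $\FF_p^\ell$. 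Since these residues are produced by a deterministic non-linear recursion on $\FF_p$ driven by $f$, proving such equidistribution is essentially equivalent to the conjecture itself.

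The main obstacle is structural: the digit sequence of an algebraic $\alpha$ carries strong algebraic constraints, but there is currently no tool capable of converting those constraints into statistical conclusions about digits. The two routes available in principle are (i) a transcendence-style argument, in which non-normality is shown to force $p$-adic rational approximations of $\alpha$ too good to be compatible with a quantitative $p$-adic Roth theorem, and (ii) a dynamical argument on the digit shift $T \colon \alpha \mapsto (\alpha - \alpha^{(0)})/p$ acting on $\ZZ_p$. Route (i) has resisted attack for more than a century in the real setting; route (ii) is obstructed by the fact that the $T$-orbit closure of an algebraic $\alpha$ is a measure-zero subset of $\ZZ_p$, so general ergodic theory does not apply. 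For these reasons I would treat \cref{conj:borel} as a working hypothesis—possibly attempting first the restricted case where $\alpha$ is a $p$-adic square root of a non-residue, so that the Hensel recursion for successive digits admits an explicit closed form—rather than attempt an unconditional proof.
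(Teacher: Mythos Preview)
Your assessment is correct: this statement is presented in the paper as a \emph{conjecture}, not a theorem, and the paper offers no proof. It is introduced as a weak $p$-adic version of Borel's conjecture (with a reference to the literature) and is used only as a hypothesis in the subsequent lemma, where it is assumed that the $p$-adic roots of $fg$ are normal. Your recognition that the problem is open and of comparable difficulty to the classical Borel conjecture, and your decision to treat it as a working hypothesis rather than attempt an unconditional proof, match the paper's own stance exactly.
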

A close inspection of the argument in \cite{kenison2023membership} shows that $p$-asymmetry is equivalent to divergence of $p$-adic valuations of $\seq[\infty]{u_n}{n=0}$ subject to the prediction concerning \(p\)-adic expansions in \cref{conj:borel}.
To be more precise, the following result (in combination with the previous discussion) shows that if \(\seq[\infty]{u_n}{n=0}\) is a hypergeometric sequence given by \eqref{eq:setup:def-u} and $p$ is a sufficiently large prime then, so long as all the $p$-adic roots of $fg$ are normal in base \(p\), $p$-asymmetry of \(\seq[\infty]{u_n}{n=0}\) is equivalent to divergence of the $p$-adic valuation of $u_n$.

\begin{lemma}\label{lem:normal}
	Let $\seq[\infty]{u_n}{n=0}$ be a hypergeometric sequence given by \eqref{eq:setup:def-u} and let $p$ be a Hensel prime for $fg$. Suppose that $|{\nu_p(u_n)}| \to  \infty$ as $n \to \infty$ and that all the roots of $fg$ in $\ZZ_p$ are normal. 
    Then $\seq[\infty]{u_n}{n=0}$ is $p$-asymmetric.
\end{lemma}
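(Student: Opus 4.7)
The plan is to argue the contrapositive: assume \(\seq[\infty]{u_n}{n=0}\) is \(p\)-symmetric (so \(m_f = m_g\)) with all \(\ZZ_p\)-roots of \(fg\) normal in base \(p\), and derive that \(|\nu_p(u_n)|\) cannot diverge. Since \(p\) is Hensel for \(fg\), I factor \(f,g\) over \(\ZZ_p\) into linear factors indexed by their \(\ZZ_p\)-roots \(\alpha_1,\dots,\alpha_{m_f}\) and \(\beta_1,\dots,\beta_{m_g}\) together with non-split factors \(f_0,g_0\) whose values at integers have vanishing \(p\)-adic valuation. The product formula \(u_n = u_0 \prod_{m=1}^n g(m)/f(m)\) then reduces the problem to studying \(S_c(n) := \sum_{m=1}^n \nu_p(m-c)\) for each \(\ZZ_p\)-root \(c\). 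I plan to evaluate \(\sum_j S_{\beta_j}(n) - \sum_i S_{\alpha_i}(n)\) along the subsequence \(n = p^K - 1\) as \(K \to \infty\) and exploit normality to control the resulting Cesàro-type deviations.

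The key computation is a closed form for \(S_c(p^K - 1)\). Writing \(S_c(n) = \sum_{k \geq 1} N_k(c, n)\) with \(N_k(c, n) := \#\{m \in [1,n] : m \equiv c \pmod{p^k}\}\), a direct tally gives, for \(k \leq K\), \(N_k(c, p^K-1) \in \{p^{K-k}-1,\, p^{K-k}\}\) (the smaller value attained iff \(c \equiv 0 \pmod{p^k}\)), and for \(k > K\), \(N_k(c, p^K-1) = 1\) iff \(c^{(K)} = c^{(K+1)} = \cdots = c^{(k-1)} = 0\) (and \(c \not\equiv 0 \pmod{p^K}\)). Telescoping and restricting to \(K > \max_c \nu_p(c)\) yields
\begin{equation*}
S_c(p^K - 1) = \frac{p^K - 1}{p-1} - \nu_p(c) + E_K(c),
\end{equation*}
where \(E_K(c)\) is the length of the maximal run of zero digits in the \(p\)-adic expansion of \(c\) starting at position \(K\) (in particular \(E_K(c)=0\) whenever \(c^{(K)} \neq 0\)).

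Substituting into the formula for \(\nu_p(u_{p^K-1})\) and cancelling the linearly growing \((p^K-1)/(p-1)\) terms via \(m_f = m_g\) gives \(\nu_p(u_{p^K-1}) - \nu_p(u_0) - C = \sum_j E_K(\beta_j) - \sum_i E_K(\alpha_i)\) with \(C = \sum_i \nu_p(\alpha_i) - \sum_j \nu_p(\beta_j)\) a constant. To rule out divergence of the left side, I will establish the Cesàro estimate
\begin{equation*}
\sum_{K=0}^{N-1} E_K(c) = \frac{N}{p-1} + o(N)
\end{equation*}
for each normal \(c\), by decomposing the sum over maximal zero-runs of length \(R\) (each contributing \(R(R+1)/2\)) and using normality to count such runs at density \((p-1)^2 p^{-R-2}\); the identity \(\sum_{R\geq 1} (R(R+1)/2)\,x^R = x/(1-x)^3\) at \(x = 1/p\) delivers the total \(1/(p-1)\). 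The triangle inequality then bounds \(\sum_{K=0}^{N-1} |\nu_p(u_{p^K-1}) - \nu_p(u_0) - C| \leq (m_f+m_g)\,N/(p-1) + o(N)\); if \(|\nu_p(u_n)| \to \infty\) then each summand eventually exceeds any prescribed \(M\), forcing the left side to grow like \((1-o(1))\,M N\), which contradicts the upper bound as soon as \(M > (m_f+m_g)/(p-1)\).

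The main technical obstacle is making the Cesàro estimate rigorous from normality alone: normality supplies the density of each fixed finite pattern, but swapping the sum over run-lengths with the \(o(N)\) error requires a truncation at \(R \sim C\log_p N\) combined with the trivial tail bound that runs of length \(R\) can start at most \(O(N p^{-R})\) times in \([0,N-1]\). A secondary care-point is the small-\(K\) regime (for \(K \leq \max_c \nu_p(c)\) the closed form acquires a harmless bounded correction), together with verifying that the Hensel hypothesis ensures the non-split factors \(f_0,g_0\) and leading coefficients \(\lambda,\mu\) contribute only a bounded error to \(\nu_p(u_n)\).
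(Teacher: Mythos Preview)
Your reduction is essentially the paper's: both argue the contrapositive, factor over $\ZZ_p$, and evaluate $\nu_p(u_n)$ along a sparse subsequence ($n=p^K-1$ for you, $n=p^s$ for the paper), arriving at an expression of the form $\text{const}+\sum_j E_K(\beta_j)-\sum_i E_K(\alpha_i)$ where $E_K(c)$ is the length of the zero-run in the base-$p$ expansion of $c$ starting at position $K$. Up to this point the two arguments coincide.

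The gap is in your final step. The Ces\`aro estimate $\sum_{K<N}E_K(c)=N/(p-1)+o(N)$ is \emph{false} for general normal $c$, and your proposed tail bound (``runs of length $R$ start at most $O(Np^{-R})$ times'') is not available: normality only gives the \emph{asymptotic} density of each fixed pattern, with no uniformity in the pattern length. A concrete counterexample: take any normal digit sequence and, for each $k\geq 0$, overwrite the block at positions $[4^k,4^k+3^k)$ by zeros. The inserted runs have total length $\sum_{j\leq k}3^j=O(M^{\log_4 3})=o(M)$ up to position $M$, so for every \emph{fixed} pattern $w$ the occurrence count changes by $o(M)$ and the modified sequence remains normal. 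Yet the single run of length $3^k$ contributes $\tfrac{1}{2}3^k(3^k+1)\sim 9^k/2$ to $\sum_{K<4^{k+1}}E_K(c)$, so $\tfrac{1}{N}\sum_{K<N}E_K(c)\gg (9/4)^k\to\infty$ along $N=4^{k+1}$. In particular your triangle-inequality bound $\sum_{K<N}|\nu_p(u_{p^K-1})-\text{const}|\leq (m_f+m_g)N/(p-1)+o(N)$ need not hold.

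The paper avoids this entirely by a much simpler endgame: since $\{K:E_K(c)\geq\ell\}$ has density $p^{-\ell}$ for each root $c$, a union bound over the $2m$ roots shows that once $p^\ell>2m$ the set of $K$ with $E_K(\gamma)<\ell$ for \emph{all} roots $\gamma$ has positive lower density, hence is infinite; along those $K$ one gets $|\nu_p(u_{p^K-1})|\leq m\ell+O(1)$. Replacing your Ces\`aro step with this two-line density argument completes your proof.
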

\begin{proof}   
We shall prove the contrapositive statement. 
Suppose that \(\seq[\infty]{u_n}{n=0}\) is $p$-symmetric. We want to show that $|{\nu_p(u_n) }|$ does not diverge to $\infty$ as $n \to \infty$. 
For our purposes, we will evaluate $\nu_p(u_n)$ for $n = p^s$ for suitably chosen $s \geq 0$.
From the product formula \eqref{eq:setup:def-u-prod}, it follows that
\begin{equation*}
    \nu_p (u_{p^s}) = \nu_p(u_0) + \nu_p \left( \prod_{n=1}^{p^s} f(n) \right) - \nu_p \left( \prod_{n=1}^{p^s} g(n) \right).
\end{equation*}
Let us focus on the contribution to the \(p\)-adic valuation made by the polynomial coefficient \(f\).
Since \(p\) is a Hensel prime of \(f\) such that \(f\) has \(m\) roots modulo \(p\), by Hensel's Lemma~\cite[Theorem 3.4.1]{gouvea2020padic}, there is a factorisation of \(f\) of the form
\(
        f(x) = (x-\alpha_1)\cdots (x-\alpha_m) h_f(x) \pmod{p^s}
\)
for each \(s>1\) where \(h_f\) has no zero modulo \(p\).
We have
\begin{equation*}
    \nu_p \left( \prod_{n=1}^{p^s} f(n) \right) = \sum_{n=1}^{p^s} \nu(f(n))
        = \sum_{n=1}^{p^s} \sum_{i=1}^m \nu(n-\alpha_i)
\end{equation*}
We let \(\a_i = \sum_{k=0}^\infty \a_i^{(k)}p^k\), \(1 \leq i \leq m\), denote the \(p\)-adic expansion of each of the roots of \(f\) in \(\ZZ_p\).
For each $1 \leq i \leq m$, let $\delta_i^{(s)}$ denote the largest index such that $\a_i^{(r+k)} = 0$ for $0 \leq k < \delta_i^{(s)}$ 

Let \(\tau_r\)
denote the \(r\)th level truncation map for \(p\)-adic expansions, so that \(\tau_r(\alpha_i) = \sum_{k=0}^{r-1} \alpha_i^{(k)}p^k\).
For \(1\le r \le s\), \(\nu(n-\alpha_i)\ge r\) if \(n = \tau_r(\alpha_i) + \ell p^t\) with \(\ell \in \{0,1,\ldots, p-1\}\) and \(r\le t \le s\). 
Thus the set of such \(n\) decomposes as a finite union of arithmetic progressions with common differences \(p^r, p^{r+1}, \ldots, p^s\).
We denote the characteristic function for each such arithmetic progression by \(\chi_{\{p^r \mid \wc\}}\) and note that each  progression contains \(p^{s-r}\) elements.
We also observe that the indices \(1\le n \le p^s\) for which \(v_p(n-\alpha_i)\ge s\) have \(p\)-adic digit expansions \(\sum_{k=s+\delta_i^{(s)}}^\infty \alpha_i^{(k)} p^k\) with \(1\le i \le m\).
It follows straightforwardly from the above observations that
\begin{equation*}
   \sum_{n=1}^{p^s} \sum_{i=1}^m \nu(n-\alpha_i) = 
      \sum_{n=1}^{p^s} \sum_{i=1}^m \sum_{r=1}^{s} \chi_{\{p^r \mid \wc\}} (n-\alpha_i) \; + \sum_{i=1}^m {(s+\delta_i^{(s)})} = \sum_{r=1}^s m p^{s-r} + \sum_{i=1}^m {(s+\delta_{i}^{(s)})}.
\end{equation*}

There is an analogous factorisation of \(g(x) = (x-\beta_1) \cdots (x-\beta_m) h_g(x) \pmod{p^s}\) and
we can apply the above reasoning to $g$ and its roots in \(\Z_p\) with \(p\)-adic expansions
\(
\b_i = \sum_{k=0}^\infty \b_i^{(k)}p^k\) for \(
1 \leq i \leq m\).
We also define $\gamma_i^{(s)}$ in an analogous manner to \(\delta_i^{(s)}\).

Then
\(	\nu_p(u_{p^s}) = \sum_{i=1}^m \delta_i^{(s)} - \sum_{i=1}^m \gamma_i^{(s)} \).
In particular, we have
\begin{equation*}
	|{\nu_p(u_{p^s})}| \leq \max\bra{ \sum_{i=1}^m \delta_i^{(s)}, \sum_{i=1}^m \gamma_i^{(s)}}.
\end{equation*}
Since each \(\alpha_i\) is normal, for each \(i\), $ \delta_i^{(s)}$ is bounded on average. 
More precisely, since for $\ell \geq 0$ we have $\delta_i^{(s)} \geq \ell$ if and only if $\alpha_i^{(s)} = \alpha_i^{(s+1)} = \dots = \alpha_i^{(s+\ell-1)} =0$, normality of $\alpha_i$ implies that 
\begin{equation*}
	\mathrm{dens} \bra{ \set{ s \in \NN }{ \delta_i^{(s)} \geq \ell } } = p^{-\ell}.
\end{equation*}
Letting $\ell$ be the least integer with $p^{\ell} > 2m$, applying the union bound we conclude that there are infinitely many values of $s$ such that $|\nu_p(u_{p^s})| \leq m \ell \leq m \log_p( 2m + 1)$. In particular, we have
	\(\liminf_{n \to \infty} |\nu_p(u_n)| < \infty\),
as desired.
\end{proof}
 
In \cite{kenison2023membership} it is shown that if $f$ and $g$ have different splitting fields then $\seq[\infty]{u_n}{n=0}$ is $p$-asymmetric for infinitely many primes.
Thus, by \cref{lem:pasymmetric-divergence}, in our divergence analysis we may restrict our attention to the situation where the splitting fields of $f$ and $g$ are the same. 
Considering a prime $p$ such that $f$ and $g$ split completely modulo $p$, we see that if $\seq[\infty]{u_n}{n=0}$ is $p$-symmetric then $\deg f = \deg g$. 
The following result is a straightforward consequence of the Chebotarev density theorem.
\begin{proposition}\label{prop:large:Galois}
Let $\seq[\infty]{u_n}{n=0}$ be a hypergeometric sequence given by \eqref{eq:setup:def-u} and suppose that $f$ and $g$ have the same splitting field $K$. Let $\a_1,\a_2,\dots,\a_d \in K$ and $\b_1,\b_2,\dots,\b_d \in K$ be the roots of $f$ and $g$ respectively. Then the following conditions are equivalent:
\begin{enumerate}
\item $\seq[\infty]{u_n}{n=0}$ is $p$-symmetric for all sufficiently large primes $p$;
\item \label{it:large:root-count} for each $\sigma \in \mathrm{Gal}(K/\QQ)$ we have
\begin{equation}\label{eq:large:root-count}
	\#\! \set{ 1 \leq i \leq d }{ \sigma(\a_i) = \a_i } 
=	\#\! \set{ 1 \leq i \leq d }{ \sigma(\b_i) = \b_i }\!.
\end{equation}
\end{enumerate}
\end{proposition}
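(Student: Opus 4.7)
The plan is to relate the quantities $m_f(p), m_g(p)$ (the number of roots of $f$, $g$ in $\FF_p$) to fixed-point counts of Frobenius elements acting on the roots $\alpha_1,\ldots,\alpha_d$ and $\beta_1,\ldots,\beta_d$ in $K$, and then invoke Chebotarev's density theorem to transfer between ``all large $p$'' and ``all $\sigma\in\mathrm{Gal}(K/\QQ)$''. Throughout I will restrict to primes $p$ that are unramified in $K$, do not divide the denominators or leading coefficients of $f$ or $g$, and are large enough that the reductions modulo $\mathfrak{p}$ of the distinct roots $\alpha_i$ (resp.\ $\beta_i$) remain distinct; this excludes only finitely many primes, so the qualifier ``for all sufficiently large $p$'' is not affected.

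The key lemma is the following: for such a prime $p$ and any prime $\mathfrak{p}$ of $\mathcal{O}_K$ above $p$,
\[
m_f(p) \;=\; \#\!\set{1\le i\le d}{\mathrm{Fr}_\mathfrak{p}(\alpha_i)=\alpha_i},
\]
and similarly for $g$. To see this, note that $m_f(p)$ equals the number of roots $\alpha_i$ whose reduction $\alpha_i\bmod\mathfrak{p}$ lies in $\FF_p\subseteq \FF_\mathfrak{p}$, because every root of $f$ modulo $p$ lifts uniquely (by Hensel's lemma, since $p\nmid\mathrm{disc}(f)$ once $p$ is large) to exactly one $\alpha_i$. But $\alpha_i\bmod\mathfrak{p}\in\FF_p$ exactly when $\mathrm{Fr}_\mathfrak{p}(\alpha_i)\equiv\alpha_i\pmod{\mathfrak{p}}$, and by our largeness assumption on $p$ this congruence is equivalent to $\mathrm{Fr}_\mathfrak{p}(\alpha_i)=\alpha_i$. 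Observe also that the right-hand side depends only on the conjugacy class of $\mathrm{Fr}_\mathfrak{p}$ in $\mathrm{Gal}(K/\QQ)$, since for $\tau\in\mathrm{Gal}(K/\QQ)$ the map $\tau^{-1}$ gives a bijection between roots fixed by $\tau\sigma\tau^{-1}$ and those fixed by $\sigma$ (each side permutes the full set of roots of $f$).

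With this lemma in hand, the direction (2)$\Rightarrow$(1) is immediate: for every sufficiently large prime $p$, pick any $\mathfrak{p}\mid p$ and apply hypothesis \eqref{eq:large:root-count} to $\sigma=\mathrm{Fr}_\mathfrak{p}$ to conclude $m_f(p)=m_g(p)$. For the converse (1)$\Rightarrow$(2), fix an arbitrary $\sigma\in\mathrm{Gal}(K/\QQ)$. By Chebotarev's density theorem, the set of rational primes $p$ (unramified in $K$) for which some $\mathfrak{p}\mid p$ satisfies $\mathrm{Fr}_\mathfrak{p}$ conjugate to $\sigma$ has positive Dirichlet density, hence contains arbitrarily large primes. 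Choose such a $p$ large enough that $p$-symmetry holds, i.e.\ $m_f(p)=m_g(p)$. Applying the key lemma to both $f$ and $g$, and using that fixed-point count is a class function, yields \eqref{eq:large:root-count} for $\sigma$.

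The main obstacle is the key lemma: one has to be careful that the counts really line up, which requires (i) Hensel lifting so that $\FF_p$-roots modulo $p$ correspond bijectively to $\mathcal{O}_K$-roots reducing into $\FF_p\subseteq\FF_\mathfrak{p}$, and (ii) injectivity of reduction on the set of roots, which is what forces us to exclude finitely many small primes. Everything else is bookkeeping plus Chebotarev.
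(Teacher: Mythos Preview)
Your proposal is correct and follows essentially the same route as the paper: identify $m_f(p)$ with the number of roots $\alpha_i$ fixed by a Frobenius element at $p$, and then use Chebotarev to pass between ``all large primes'' and ``all $\sigma\in\mathrm{Gal}(K/\QQ)$''. Your write-up is in fact more careful than the paper's (which asserts the key lemma without elaboration), spelling out the Hensel/injectivity-of-reduction issues and the conjugacy-invariance of the fixed-point count.
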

\begin{proof}
	Let $p$ be a sufficiently large prime and \(\FF_p\) the finite field of \(p\) elements; in particular, we assume that $p$ is unramified in $K$. 
    The number of roots of $f$ in $\FF_p$ is the number of roots $\a_i$ of $f$ ($1 \leq i \leq d$) fixed by the corresponding Frobenius element $\mathrm{Fr}_p {\colon x \mapsto x^p}$. Thus, $\seq[\infty]{u_n}{n=0}$ is $p$-symmetric if and only if 
\begin{equation*}
	\#\! \set{ 1 \leq i \leq d }{ \mathrm{Fr}_p(\a_i) = \a_i } 
=	\#\! \set{ 1 \leq i \leq d }{ \mathrm{Fr}_p(\b_i) = \b_i}\!.
\end{equation*}
Let \(\mathfrak{C}\) be a conjugacy class of \(\mathrm{Gal}(K/\QQ)\).
By Chebotarev's density theorem, the set of primes \(p\) that do not divide the discriminant of \(K\) and for which \(\mathrm{Fr}_p\) belongs to \(\mathfrak{C}\) has positive density equal to \(\#\mathfrak{C}/\#\mathrm{Gal}(K/\QQ)\).  Importantly, there are infinitely many primes \(p\) for which \(\mathrm{Fr}_p\in\mathfrak{C}\).
\end{proof}

As an immediate consequence of \cref{prop:large:Galois}, we see that in order for $\seq[\infty]{u_n}{n=0}$ to be $p$-symmetric for all sufficiently large primes $p$ it is enough that the respective roots $\a_1,\a_2,\dots,\a_d$ and $\b_1,\b_2,\dots,\b_d$ of $f$ and $g$ satisfy (possibly after rearrangement):
\begin{equation}\label{eq:large:same-roots}
	\QQ(\a_i) = \QQ(\b_i) \quad  \text{for all } 1 \leq i \leq d. 
\end{equation}
{Further, if \eqref{eq:large:same-roots} does not hold for each \(\sigma\in\Gal(K/\Q)\), then we straightforwardly deduce that the set of primes $p$ for which $\seq[\infty]{u_n}{n=0}$ is $p$-asymmetric has positive relative density in the set of all primes.}
We recall \hyperlink{defin:D}{class \(\mathscr{D}\)} (\cref{sec:intro}) of hypergeometric sequences \(\seq[\infty]{u_n}{n=0}\) where the condition \eqref{eq:large:same-roots} does not hold.  

The next example shows that the condition in \eqref{eq:large:same-roots} is not necessary for \(p\)-symmetry.%

\begin{example}\label{ex:large}
	Let $f,g \in \ZZ[X]$ be given by
	\begin{align*}
	f(X) &= (X^4-10X^2+1)X^2,\\ g(X) &= (X^2-2)(X^2-3)(X^2-6).
	\end{align*}		
	Then $f$ and $g$ have the same number of roots modulo each prime $p \geq 5$. Indeed, since $6 = 2 \cdot 3$, for each prime $p$ either all or exactly one of $2,3$ and $6$ are quadratic residues modulo $p$. The roots of $f$ are $\pm \sqrt{2} \pm \sqrt{3}$ so $f$ has $4$ roots modulo $p$ if $2$ and $3$ are quadratic residues modulo $p$, and no roots otherwise. Thus, if $2,3$ and $6$ are quadratic residues modulo $p$ then $f$ and $g$ both have $6$ roots modulo $p$, and if only one of $2,3$ and $6$ is a quadratic residue modulo $p$ then $f$ and $g$ both have two roots modulo $p$. However, the roots of $f$ and $g$ cannot be rearranged so that \eqref{eq:large:same-roots} holds. 
\end{example}

Despite \cref{ex:large}, there are situations where $p$-symmetry implies \eqref{eq:large:same-roots}.

\begin{proposition} \label{prop:intermediate}
	Let $\seq[\infty]{u_n}{n=0}$ be a hypergeometric sequence given by \eqref{eq:setup:def-u} and suppose that all irreducible factors of $f$ and $g$ have degree at most $2$. If $\seq[\infty]{u_n}{n=0}$ is $p$-symmetric for all sufficiently large primes $p$ then \eqref{eq:large:same-roots} holds (possibly after permuting the roots).
\end{proposition}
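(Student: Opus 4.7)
The plan is to combine \cref{prop:large:Galois} with a Fourier inversion argument on an elementary abelian \(2\)-group. By the remark preceding \cref{prop:large:Galois}, the hypothesis that \(\seq[\infty]{u_n}{n=0}\) is \(p\)-symmetric for all sufficiently large primes forces \(f\) and \(g\) to share a common splitting field \(K\). Since every irreducible factor of \(fg\) has degree at most two, each root of \(fg\) lies in \(\QQ\) or a quadratic extension thereof, so \(K\) is multi-quadratic; choosing an independent set of squarefree integers we may write
\[
	K = \QQ\bra{\sqrt{\Delta_1}, \dots, \sqrt{\Delta_m}}
\]
with Galois group \(G \cong (\ZZ/2\ZZ)^m\), whose elements I index as \(\sigma_\epsilon\) for \(\epsilon \in \{0,1\}^m\) via \(\sigma_\epsilon(\sqrt{\Delta_i}) = (-1)^{\epsilon_i}\sqrt{\Delta_i}\). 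For each \(S \subseteq \{1,\dots,m\}\) I set \(L_S \coloneqq \QQ\bra{\sqrt{\prod_{i\in S}\Delta_i}}\), with the convention \(L_\emptyset = \QQ\); under our hypothesis the \(L_S\) are exactly the fields that can arise as \(\QQ(\alpha_i)\) or \(\QQ(\beta_j)\).

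The next step is to translate \cref{prop:large:Galois} into a linear system. Let \(n_f(L_S)\) and \(n_g(L_S)\) denote the number of roots of \(f\) and \(g\) respectively, counted with multiplicity, that generate \(L_S\) over \(\QQ\). Observe that \(\sigma_\epsilon\) fixes \(\sqrt{\prod_{i\in S}\Delta_i}\) if and only if \(\prod_{i \in S}(-1)^{\epsilon_i} = 1\), equivalently \((-1)^{\langle \epsilon,\chi_S\rangle} = 1\), where \(\chi_S\in\{0,1\}^m\) is the indicator vector of \(S\) and the pairing is the usual dot product. Consequently,
\[
	\#\!\set{i}{\sigma_\epsilon(\alpha_i) = \alpha_i} = \sum_{S\,:\,(-1)^{\langle \epsilon,\chi_S\rangle} = 1} n_f(L_S),
\]
and the analogous identity holds for the \(\beta_j\). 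Invoking \cref{prop:large:Galois} then yields, for every \(\epsilon\in\{0,1\}^m\), the vanishing
\[
	\sum_{S\,:\,(-1)^{\langle \epsilon,\chi_S\rangle} = 1} h(S) = 0,
	\quad \text{where } h(S) \coloneqq n_f(L_S) - n_g(L_S).
\]

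To finish I would apply Fourier inversion on \(G\). Define the Walsh--Hadamard transform \(\widehat h(\epsilon) \coloneqq \sum_S h(S)(-1)^{\langle \epsilon,\chi_S\rangle}\). A short calculation gives
\[
	\sum_{S\,:\,(-1)^{\langle \epsilon,\chi_S\rangle} = 1} h(S) = \tfrac{1}{2}\brabig{\widehat h(0) + \widehat h(\epsilon)},
\]
so the previous display forces \(\widehat h(0) + \widehat h(\epsilon) = 0\) for every \(\epsilon\). Setting \(\epsilon = 0\) yields \(\widehat h(0) = 0\), and hence \(\widehat h \equiv 0\); Fourier inversion then gives \(h \equiv 0\), i.e., \(n_f(L_S) = n_g(L_S)\) for every \(S\). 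Matching roots of \(f\) with roots of \(g\) that generate the same subfield produces the permutation required in \eqref{eq:large:same-roots}. The only non-routine step is the Fourier inversion, which is nevertheless elementary; the remaining work is bookkeeping enabled by the multi-quadratic structure that the degree-\(\leq 2\) hypothesis enforces on \(K\).
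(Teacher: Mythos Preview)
Your proof is correct and follows essentially the same strategy as the paper's: both exploit the multi-quadratic structure of the splitting field to identify $\mathrm{Gal}(K/\QQ)$ with $(\ZZ/2\ZZ)^m$, invoke \cref{prop:large:Galois} to obtain one linear constraint per group element, and then recover the root counts via Fourier (Walsh--Hadamard) inversion on the elementary abelian $2$-group. The only cosmetic difference is that the paper encodes each degree-$2$ factor by the prime support of its discriminant and pairs up linear factors to force even degree, whereas you work directly with the quadratic subfields $L_S$ generated by the roots; your packaging is slightly cleaner but the underlying argument is the same.
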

\begin{proof}
	Let $d = \deg f = \deg g$. It will be convenient to assume that $f$ and $g$ can be written as products $\prod_{i=1}^{d/2} f_i$ and $\prod_{i=1}^{d/2} g_i$ where $f_i$ and $g_i$ are (not necessarily irreducible) polynomials of degree $2$; if $d$ is even we can simply group the linear terms into pairs, and if $d$ is odd we can multiply $f$ and $g$ by the same monomial and reduce to the previous case. Let $\Delta_i$ ($1 \leq i \leq d/2$) be square-free integers such that the discriminant of $f_i$ takes the form $q^2 \Delta_i$ for some rational $q$ (if the discriminant of $f_i$ is $0$, put $\Delta_i = 1$). Note in particular that $\Delta_i = 1$ if $f_i$ splits over $\QQ$. Let $\Gamma_i$ be defined analogously, with $g_i$ in place of $f_i$. Let $p_1,p_2, \dots, p_r$ be the list of primes that divide $\Delta_i$ or $\Gamma_i$ for at least one $i$ and let $K = \QQ(\sqrt{p_1},\sqrt{p_2},\dots,\sqrt{p_r})$. Note that $f$ and $g$ split completely over $K$. For $\epsilon \in \{0,1\}^r$ let $\sigma_{\epsilon} \in \mathrm{Gal}(K/\QQ)$ be the automorphism of $K$ specified by $\sigma_{\epsilon}(\sqrt{p_j}) = (-1)^{\epsilon_j} \sqrt{p_j}$, and let $L_\epsilon < K$ be the field given by $L_{\epsilon} = \set{x \in K}{ \sigma_{\epsilon}(x) = x}$. By  \cref{prop:large:Galois}, for each $\epsilon \in \{0,1\}^r$ we have 
\begin{equation}\label{eq:large:58:99}
	\# \set{1 \leq i \leq d}{ \a_i \in L_{\epsilon}} = 
	\# \set{1 \leq i \leq d}{ \b_i \in L_{\epsilon}}.
\end{equation}

	For $1 \leq i \leq d$, let $\delta^{(i)} \in \{0,1\}^r$ be given by $\delta^{(i)}_j = 1$ if $p_j \mid \Delta_i$ and $\delta^{(i)}_j = 0$ otherwise, meaning that 
    \(\Delta_i = \prod_{j=1}^r p_j^{\delta^{(i)}_j}\).
    Let $\gamma^{(i)}_j$ be defined analogously, with $\Gamma_i$ in place of $\Delta_i$. 
    With this notation, we have $\alpha_i \in L_{\epsilon}$ if and only if $\delta^{(i)} \cdot \epsilon \equiv 0 \pmod 2$, where we use the convention $\delta^{(i)} \cdot \epsilon = \sum_{j=1}^r \delta^{(i)}_j \epsilon_j$. 
    Thus,
\begin{equation*} %
	\frac{1}{2}\bra{ 1 + (-1)^{\delta^{(i)} \cdot \epsilon}} = 
	\begin{cases}
		1 & \alpha_i \in L_{\epsilon},\\
		0 & \alpha_i \not \in L_{\epsilon}.
	\end{cases}
\end{equation*}
Pick any $\tau \in \{0,1\}^r \setminus \{0^r\}$. 
Multiplying the preceding equation by $(-1)^{\tau \cdot \epsilon}$, taking the average over $\epsilon \in \{0,1\}^r$, and summing over all $1 \leq i \leq d$ we see that
\begin{equation*} %
	\Eop_{\epsilon \in \{0,1\}^r} (-1)^{\tau \cdot \epsilon} \#\! \set{ 1 \leq i \leq {d}  }{ \a_i \in L_{\epsilon}} = \frac{1}{2} \# \{i \mid \delta^{(i)} = \tau \}.
\end{equation*}
For $\tau = 0^r$ we have a similar formula with an additional term $d/2$ on the right side. 
Applying the same reasoning to $\gamma^{(i)}$ rather than $\delta^{(i)}$ and bearing in mind \eqref{eq:large:58:99} we conclude that for each $\tau \in \{0,1\}^r$ we have
\begin{equation*} %
\# \{1 \leq i \leq d \mid \delta^{(i)} = \tau \} = \# \{1 \leq i \leq d \mid \gamma^{(i)} = \tau \}.
\end{equation*}
Thus, possibly after rearranging the roots of $f$ and $g$, we have $\delta^{(i)} = \gamma^{(i)}$ for all $1 \leq i \leq d$. It follows that we also have $\Delta_i = \Gamma_i$ and $\QQ(\alpha_i) = \QQ(\beta_i)$, as needed.
\end{proof}

For the following proposition, recall that a field $K$ is a \emph{cyclic extension} of $\QQ$ if $K$ is a Galois extension of $\QQ$ and the Galois group $\mathrm{Gal}(K/\QQ)$ is cyclic. 
For instance, if $p$ is a prime and $\zeta_p = \exp(2\pi \iu/p)$ is a $p$th root of unity then %
$\QQ(\zeta_p)$ is a cyclic extension of $\QQ$ since $\mathrm{Gal}(\QQ(\zeta_p)/\QQ) \simeq (\ZZ/p\ZZ)^{\times} \simeq \ZZ/(p-1)\ZZ$~\cite[Theorem 20.12]{stewart2015galois}.

\begin{proposition} \label{prop:cyclic}
	Let $\seq[\infty]{u_n}{n=0}$ be a hypergeometric sequence given by \eqref{eq:setup:def-u} and suppose that $f$ and $g$ split completely over a cyclic extension $K$ of $\QQ$. If $\seq[\infty]{u_n}{n=0}$ is $p$-symmetric for all sufficiently large primes $p$ then \eqref{eq:large:same-roots} holds (possibly after a permutation of the roots).
\end{proposition}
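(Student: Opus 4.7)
My plan is to translate the hypothesis of $p$-symmetry for all large primes into a statement about the subgroup lattice of the cyclic Galois group $G := \Gal(K/\QQ)$, and then exploit the fact that this lattice is totally ordered by divisibility. Since $K/\QQ$ is Galois, for each root $\a_i$ of $f$ the stabiliser $H(\a_i) := \{\sigma \in G : \sigma(\a_i) = \a_i\}$ coincides with $\Gal(K/\QQ(\a_i))$, and the Galois correspondence recovers $\QQ(\a_i) = K^{H(\a_i)}$. I would therefore aim for the stronger conclusion that, after a permutation, $H(\a_i) = H(\b_i)$ for every $1 \leq i \leq d$, which immediately implies \eqref{eq:large:same-roots}.

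To carry this out, for each subgroup $H \leq G$ I would set $a_H := \#\{i : H(\a_i) = H\}$ and $b_H := \#\{i : H(\b_i) = H\}$, so the problem reduces to proving $a_H = b_H$ for every $H$: matching up $\a_i$'s and $\b_j$'s with equal stabilisers then yields the required permutation. Write $n = |G|$; because $G$ is cyclic, for each $e \mid n$ there is a unique subgroup $H_e$ of order $e$, and $H_e \leq H_{e'} \iff e \mid e'$. Noting that, by the discussion preceding \cref{prop:large:Galois}, $p$-symmetry for all large $p$ forces $f$ and $g$ to share a splitting field (a subfield of $K$, hence still cyclic over $\QQ$), I can invoke \cref{prop:large:Galois}. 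For each divisor $d \mid n$ I pick an element $\sigma_d \in G$ of exact order $d$, so that $\sigma_d \in H_e$ if and only if $d \mid e$. Applying \cref{prop:large:Galois} with $\sigma = \sigma_d$ then rewrites \eqref{eq:large:root-count} as
\[
\sum_{\substack{e \mid n \\ d \mid e}} a_{H_e} \;=\; \sum_{\substack{e \mid n \\ d \mid e}} b_{H_e} \qquad \text{for every } d \mid n.
\]

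The remaining step is to invert this system; this is where the routine work lies. Because the divisor lattice of $n$ is finite and ordered by divisibility, one can either invoke Möbius inversion on it or proceed by descending induction on $d$: the case $d = n$ yields $a_{H_n} = b_{H_n}$ directly, and assuming the equality for all proper multiples of $d$ isolates $a_{H_d} = b_{H_d}$. Once this holds for every $d$, concatenating arbitrary bijections between $\{i : H(\a_i) = H_d\}$ and $\{j : H(\b_j) = H_d\}$ across all divisors $d$ produces the sought permutation.

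The only genuine obstacle is to use cyclicity at the correct moment: it is precisely the fact that subgroup containment in $G$ coincides with divisibility among the orders that lets the system above be inverted uniquely. For non-cyclic extensions the subgroup lattice is strictly richer and one cannot deduce $a_H = b_H$ from the sums above, in line with the non-cyclic obstruction already witnessed by \cref{ex:large}.
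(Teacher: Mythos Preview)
Your proposal is correct and takes essentially the same approach as the paper: both invoke \cref{prop:large:Galois} and then invert on the (totally ordered) subgroup/subfield lattice of the cyclic extension, the paper phrasing this as inclusion--exclusion over subfields and you phrasing it dually as descending induction (M\"obius inversion) over divisors of $|G|$. Your remark that $p$-symmetry for all large $p$ forces $f$ and $g$ to share a splitting field (hence a cyclic one inside $K$) is a point the paper glosses over, so in that respect your write-up is slightly more careful.
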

\begin{proof}
	Since all subgroups of the cyclic group $\mathrm{Gal}(K/\QQ)$ are cyclic (and hence generated by a single element), the fundamental theorem of Galois theory implies that each subfield $L$ of $K$ takes the form $L = \set{x \in K}{\sigma(x) = x}$ for some $\sigma \in \mathrm{Gal}(K/\QQ)$. Hence, it follows from \cref{prop:large:Galois} that $f$ and $g$ have the same number of roots in $L$. Note that for $\a \in L$ we have $\QQ(\a) = L$ if and only if 
	\[ \a \in L \setminus \textstyle \bigcup_{M < L} M,\]
	 where the union is taken over all proper subfields $M < L$. We may enumerate these subfields as $M_1, M_2, \dots, M_r$. By the inclusion-exclusion principle, we have
	 \begin{equation*}
	 \#\!\set{ 1 \leq i \leq d }{ \QQ(\a_i) = L } = \sum_{S \subset \{1,2,\dots,r\} } (-1)^{|S| }
	 \#\!\set{ 1 \leq i \leq d  }{\begin{array}{l} \a_i \in L \text { and }   \a_i \in M_k\\ \!\!\text{ for all } k \in S\end{array}}. 
	 \end{equation*}

Since an intersection of fields is also a field, applying the same reasoning to the roots of $g$ we conclude that 
\begin{equation*}
\#\!\set{ 1 \leq i \leq d  }{ \QQ(\a_i) = L } = \#\!\set{ 1 \leq i \leq d  }{ \QQ(\b_i) = L }\!. 
\end{equation*}
This implies that we can reorder $\a_i$ and $\b_i$ in such a way that \eqref{eq:large:same-roots} holds.
\end{proof}

Finally, we observe that \cref{thm:asymmetry} (restated below) follows as a corollary of \cref{prop:intermediate,prop:cyclic}.

\theoremasymmetry*

\begin{proof}
Observe that it is sufficient to prove that 
there is a prime \(p\) such that \(\nu_p(u_n)\gg n\)  
where the implied constant depends only on \(fg\) and \(p\).
More specifically, should such a linear growth estimate hold for the sequence of \(p\)-adic valuations \(\nu_p(u_n)\), then the desired linear growth estimate on the Weil height holds by \cref{prop:heights}.

Suppose that the hypergeometric sequence \(\seq[\infty]{u_n}{n=0}\in\mathscr{D}\) and that each of the irreducible factors of the associated polynomial \(fg\) has degree at most two. 
Since \(\seq[\infty]{u_n}{n=0}\in\mathscr{D}\), there is no possible rearrangement of the sequence's parameters for which \eqref{eq:large:same-roots} holds. 
Thus, by \cref{prop:intermediate}, we conclude that \(\seq[\infty]{u_n}{n=0}\) is \(p\)-asymmetric. In fact, the set of primes for which \(\seq[\infty]{u_n}{n=0}\) is \(p\)-asymmetric has positive density by \cref{prop:large:Galois}. 
The desired estimate for \(h_{\weil}(u_n)\) quickly follows from the effective growth bounds in \cref{lem:pasymmetric-divergence} for \(p\)-asymmetric hypergeometric sequences.

\emph{Mutatis mutandis,} the argument for hypergeometric sequences \(\seq[\infty]{u_n}{n=0}\in\mathscr{D}\) where the splitting field of the associated polynomials \(fg\) are cyclotomic is identical to that given in the previous case. (The only difference being that the \(p\)-asymmetry of \(\seq[\infty]{u_n}{n=0}\) follows from \cref{prop:cyclic}.)
\end{proof}

\section{The Membership Problem for hypergeometric sequences} 
\label{sec:membership}

\subsection*{Background and motivation}
In this section, we consider an application of our effective results in \cref{sec:divergence}.
The Membership Problem is an open decision problem concerning recursively defined sequences: %
Membership asks to procedurally determine whether a chosen target value is an element of a given sequence.
Perhaps the most well-known variant of Membership is the Skolem Problem.
The Skolem Problem asks to determine whether a given C-finite sequence vanishes (i.e., attains the value zero) at some index~\cite{everest2003recurrence}. (Here by a \emph{C-finite sequence} we mean a sequence that satisfies a linear recurrence relation with constant coefficients~\cite{everest2003recurrence, kauers2011tetrahedron}.)
Work in the 1980s established the decidability of Skolem for recurrences of order at most four~\cite{mignotte1984distance, vereshchagin1985occurence}; however, decidability at higher orders remains open.

Motivation for settling decidability of Skolem arises naturally in both theoretical computer science and pure mathematics.
Indeed, a proof that affirms the decidability of Skolem would be equivalent to a constructive proof of the Skolem--Mahler--Lech Theorem~\cite{everest2003recurrence}, which states that the set of indices \(\{n\in\N \mid  u_n = 0\}\) where a C-finite sequence \(\seq[\infty]{u_n}{n=0}\) vanishes is given by the union of a finite %
set and a finite number %
of infinite arithmetic progressions.

Given the simplicity of the model, it is, perhaps, surprising that decidability of the Membership Problem is open for hypergeometric sequences.
We take the opportunity to briefly sketch the obstacle to settling decidability in this setting (similar sketches are also given in \cite{kenison2024threshold, kenison2023membership, nosan2022membership}).
Given a recurrence relation of the form \eqref{eq:setup:def-u}, initial value \(u_0\in \QQ\), and rational target \(t\in\QQ\), the Membership Problem asks to determine whether there exists an \(n\in\NN\) such that \(u_n=t\).
From the product formulation for hypergeometric sequences (see \eqref{eq:setup:def-u-prod}), there is a straightforward argument that shows that the problem of deciding Membership in this setting reduces to that of deciding Membership for the subclass of hypergeometric sequences that either
diverge to infinity, or converges to some finite non-zero limit.
For sequences that do not converge to \(t\), we can compute a bound \(B\in\NN\) such that the terms in the tail subsequence \(\seq[\infty]{u_n}{n=B}\) all satisfy \(u_n \neq t\). 
Membership for such sequences then reduces to a finite search problem; that is to say, to determine Membership it suffices to determine whether \(t\in\{u_n \mid 0 \le n \le B-1\}\).
In the second case, where a sequence converges to \(t\), we can, without loss of generality, additionally assume that the sequence is eventually strictly monotonic.
Akin to the first case, we can compute a bound above which the terms in the tail subsequence do not equal the target \(t\) and so once again, deciding Membership reduces to a finite search problem.
Unfortunately, there is no known algorithm that can decide whether a hypergeometric sequence converges to a given rational limit.
This phenomenon is related to open conjectures on the nature of the gamma function (cf.~\cite{kenison2024threshold, nosan2022membership}).

For hypergeometric sequences, the Membership Problem is relatively straightforward to solve in the case where we can find a large prime $p$ such that $f$ and $g$ have different numbers of roots modulo $p$. 
Indeed, this is the methodology employed in \cite{kenison2023membership} to achieve the following.
\begin{lemma}[{\cite[Lemma 9]{kenison2023membership}}]
    \label{lem:pasymmetric-membership}
        The Membership Problem is decidable for the class of  hypergeometric sequences that are \(p\)-asymmetric for a given prime \(p\).
\end{lemma}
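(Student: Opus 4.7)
The plan is to reduce the Membership question to a finite search by exploiting the linear divergence of $p$-adic valuations established in \cref{lem:pasymmetric-divergence}. Fix a prime $p$ for which the input sequence $\seq[\infty]{u_n}{n=0}$ is $p$-asymmetric, and let $t \in \QQ$ be the prescribed target.

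First, handle the degenerate case $t = 0$ separately. From the product formulation \eqref{eq:setup:def-u-prod} together with the standing hypothesis that $g$ has no positive integer roots, one sees that $u_n = 0$ for some $n \geq 0$ if and only if $u_0 = 0$ (in which case $u_n = 0$ for every $n$). Thus when $t = 0$ the problem is settled by a single comparison. For $t \neq 0$, compute $v := \nu_p(t) \in \ZZ$.

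By $p$-asymmetry, \cref{lem:pasymmetric-divergence} supplies effectively computable constants $c := |m_g - m_f|/(p-1) > 0$ and $C > 0$ (depending only on $fg$ and $p$) such that
\[|\nu_p(u_n)| \;\geq\; c\, n - C\log(n+1) \qquad \text{for all } n \geq 1.\]
Solving this lower bound for the least threshold beyond which the right-hand side exceeds $|v|$ yields an explicit bound $N = N(v, f, g, p) \in \NN$ with $|\nu_p(u_n)| > |v|$ for every $n \geq N$. For such $n$ we necessarily have $\nu_p(u_n) \neq v$, and hence $u_n \neq t$.

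It remains only to test the finite set $\{u_0, u_1, \dots, u_{N-1}\}$ for the presence of $t$; each of these terms is exactly computable in $\QQ$ from the recurrence \eqref{eq:setup:def-u}, and equality with $t$ is decidable. The sole nontrivial input is \cref{lem:pasymmetric-divergence}, whose effectivity propagates directly to the threshold $N$; there is no genuine obstacle beyond bookkeeping, which is precisely why the $p$-asymmetric case is the tractable one and why producing such a prime $p$ (as accomplished in \cref{sec:divergence}) is the real content of the decidability result in \cref{thm:intermediate}.
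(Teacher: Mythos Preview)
Your proof is correct and follows essentially the same approach as the paper: use the effective divergence of $|\nu_p(u_n)|$ from \cref{lem:pasymmetric-divergence} to compute a threshold beyond which $|\nu_p(u_n)| > |\nu_p(t)|$, then exhaustively check the finitely many remaining terms. Your treatment is in fact slightly more careful than the paper's sketch, as you explicitly dispose of the $t=0$ case.
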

Briefly, the argument underpinning \cref{lem:pasymmetric-membership} works as follows. For a given \(p\)-asymmetric sequence, we can compute an effective bound $n_0$ such that for all $n \geq n_0$, we have $|{\nu_p(u_n)}| > |\nu_p(t)|$ (we can use the effective bounds in \cref{lem:pasymmetric-divergence}).
Thus, for all sufficiently large \(n\), we have that $u_n \neq t$ and so deciding Membership in this setting reduces to an exhaustive finite search problem.

\subsection*{Decidability results for Membership}
Recall \cref{thm:asymmetry} (\cref{sec:divergence}) that establishes effective bounds on the divergence of \(|\nu_p(u_n)|\) for certain hypergeometric sequences.
It is noteworthy that decidability results for Membership in this setting follow immediately from \cref{thm:asymmetry}.
 \begin{restatable}{corollary}{theoremintermediate}
 \label{thm:intermediate}
 The Membership Problem is decidable for hypergeometric sequences \(\seq[\infty]{u_n}{n=0}\) in \hyperlink{defin:D}{class \(\mathscr{D}\)} that, in addition, satisfy either of the following conditions.
\begin{enumerate}
    \item  Each of the irreducible factors of the polynomial \(fg\) has degree at most two.
    \item The splitting field of \(fg\) is cyclotomic.
\end{enumerate}
 \end{restatable}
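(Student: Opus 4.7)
The plan is to reduce the claim to the known decidability result \cref{lem:pasymmetric-membership}. More precisely, given a hypergeometric sequence $\seq[\infty]{u_n}{n=0}$ in \hyperlink{defin:D}{class $\mathscr{D}$} satisfying condition (1) or (2), it suffices to exhibit, effectively, a prime $p$ for which $\seq[\infty]{u_n}{n=0}$ is $p$-asymmetric, since \cref{lem:pasymmetric-membership} then immediately yields a decision procedure for Membership.

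To produce such a $p$, I would argue as in the proof of \cref{thm:asymmetry}. If $f$ and $g$ have different splitting fields then $\seq[\infty]{u_n}{n=0}$ is already $p$-asymmetric for infinitely many primes (by the remark preceding \cref{prop:large:Galois}), and we are done. Otherwise $f$ and $g$ share a common splitting field $K$. Since $\seq[\infty]{u_n}{n=0} \in \mathscr{D}$, there is no permutation $\sigma \in S_d$ for which $\QQ(\alpha_i) = \QQ(\beta_{\sigma(i)})$ for all $i$, i.e.\ condition \eqref{eq:large:same-roots} fails. Under hypothesis (1), \cref{prop:intermediate} contradicts $p$-symmetry for all sufficiently large primes; under hypothesis (2), $K$ is cyclotomic and hence cyclic, so \cref{prop:cyclic} applies analogously. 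Either way, $\seq[\infty]{u_n}{n=0}$ is $p$-asymmetric for some prime $p$ (in fact, by \cref{prop:large:Galois}, for a set of primes of positive relative density).

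It remains to locate such a prime effectively. To do so, one may enumerate primes $p = 2, 3, 5, \dots$ and, for each, first check that $p$ is a Hensel prime for $fg$ (a routine computation on resultants and discriminants) and then compute the number of roots of $f$ and of $g$ in $\FF_p$, for instance by factoring $f$ and $g$ modulo $p$. By the previous paragraph, this search terminates with a witness $p$ of $p$-asymmetry. Feeding $p$ together with the target $t \in \QQ$ and the recurrence \eqref{eq:setup:def-u} into the procedure underlying \cref{lem:pasymmetric-membership} yields an effective bound $n_0$ (obtained from the quantitative divergence $|\nu_p(u_n)| = \frac{|m_g - m_f| n}{p-1} + O(\log n)$ of \cref{lem:pasymmetric-divergence}), beyond which $|\nu_p(u_n)| > |\nu_p(t)|$ and hence $u_n \neq t$; Membership is then decided by the finite check $t \in \{u_0, u_1, \dots, u_{n_0 - 1}\}$.

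The only subtlety I anticipate concerns the preliminary step of reducing to a \emph{regular} recurrence: by \cref{lem:setup:regularise}, $u_n = q(n) \tilde u_n$ for a rational $q \in \QQ(x)$ and a regular hypergeometric sequence $\tilde u_n$ in the same class (the membership of $\tilde u_n$ in $\mathscr{D}$ and the preservation of conditions (1)/(2) on the irreducible factors of $\tilde f \tilde g$ both follow from the construction in \cref{lem:setup:regularise}). Deciding $u_n = t$ is then equivalent to deciding $\tilde u_n = t/q(n)$, and away from the finitely many poles and zeros of $q$ this reduces to finitely many Membership queries for $\tilde u_n$ against rational targets, each handled by the procedure above. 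This regularisation step, rather than any substantive new idea, is the main bookkeeping obstacle; the heart of the argument is simply the combination of \cref{prop:intermediate} or \cref{prop:cyclic} with \cref{lem:pasymmetric-membership}.
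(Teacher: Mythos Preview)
Your proposal is correct and follows essentially the same approach as the paper: use \cref{prop:intermediate} (respectively \cref{prop:cyclic}) to deduce from the class-$\mathscr{D}$ assumption that $\seq[\infty]{u_n}{n=0}$ is $p$-asymmetric for some prime $p$, and then invoke \cref{lem:pasymmetric-membership}. Your added remarks on effectively locating $p$ by enumeration are a sensible elaboration that the paper leaves implicit; however, the final paragraph on regularisation is unnecessary (neither \cref{prop:intermediate}, \cref{prop:cyclic}, nor \cref{lem:pasymmetric-membership} requires regularity), and as written the reduction there is not quite a Membership query since the target $t/q(n)$ varies with $n$---but this does not affect the argument, which stands without that step.
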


\begin{proof}

Let \(\seq[\infty]{u_n}{n=0}\in\mathscr{D}\) be a hypergeometric sequence such that each of the irreducible factors of the associated polynomial \(fg\) has degree at most two. 
Since \(\seq[\infty]{u_n}{n=0}\in\mathscr{D}\), there is no possible rearrangement of the sequence's parameters for which \eqref{eq:large:same-roots} holds. Thus, by \cref{prop:intermediate}, we conclude that \(\seq[\infty]{u_n}{n=0}\) is \(p\)-asymmetric for some prime $p$. %
We note, by \cref{lem:pasymmetric-membership},  that the Membership Problem is decidable for \(p\)-asymmetric sequences.

\emph{Mutatis mutandis,} the argument that settles decidability of the Membership Problem for hypergeometric sequences \(\seq[\infty]{u_n}{n=0}\in\mathscr{D}\) where the splitting field of the associated polynomials \(fg\) are cyclotomic is identical to that given in the previous case. (The only difference being that the \(p\)-asymmetry of \(\seq[\infty]{u_n}{n=0}\) follows from \cref{prop:cyclic}.)
\end{proof}

Given the above application of effective divergence results, we take the opportunity to state a direction for future research.
On the one hand, the equidistribution results in the literature concerning quadratic congruences to prime moduli are not effective.
On the other hand, it appears plausible that an effective version of \cref{thm:toth-quant} (i.e., a statement of \cref{thm:toth-quant} where the constants $N_0$ and $c$ are effectively computable), can be obtained.
\begin{conjecture}
    The Membership Problem is decidable for hypergeometric sequences in class~\(\mathscr{C}\).
\end{conjecture}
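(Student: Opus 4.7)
The plan is to derive decidability from an \emph{effective} version of \cref{thm:main}, upgrading the qualitative linear growth bound $h_{\weil}(u_n) \gg n$ to a statement with an explicitly computable constant. Once one has an inequality $h_{\weil}(u_n) \ge Cn$ valid for all $n \ge N_0$, with both $C$ and $N_0$ computable from the defining data $(f,g,u_0)$, the membership decision procedure is standard: given a rational target $t$, compute the least $N \ge N_0$ with $CN > h_{\weil}(t)$; then $u_n \ne t$ for every $n \ge N$, so deciding whether $t \in \set{u_n}{n \ge 0}$ reduces to inspecting the finite initial segment $u_0, u_1, \ldots, u_{N-1}$. This mirrors the strategy used in the proof of \cref{thm:intermediate}, but with the effective divergence of $|\nu_p(u_n)|$ replaced by the effective divergence of $h_{\weil}(u_n)$.

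The reduction to an effective growth estimate is thus the crux. Inspecting the proof of \cref{prop:quadratic}, the only non-effective ingredient is the invocation of \cref{thm:toth-quant}, which in turn rests on T\'oth's equidistribution theorem for roots of quadratic congruences to prime moduli. The first step of the plan is to establish an effective analogue of \cref{thm:toth-quant}: computable values $N_0, c > 0$ such that, for every $N \ge N_0$, there are at least $cN/\log N$ primes $p \in [N,(1+\delta)N]$ with $p \equiv a \pmod{q}$, $\bra{\frac{\Delta}{p}}=1$, and $\rep(r \pm s\sqrt{\Delta})/p \in [\alpha,\beta)$. Such effectivity should be achievable in principle by tracking constants through the Weil-type bounds on Kloosterman sums that underlie T\'oth's argument, combined with explicit versions of the Siegel--Walfisz theorem on primes in arithmetic progressions.

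The second step would be to substitute the effective version of \cref{thm:toth-quant} into the proof of \cref{prop:quadratic} so as to extract an explicit positive constant in the bound $h_{\weil}(u_n) \gg n$. All remaining ingredients of that proof are already effective: Hensel lifting supplies the roots of $f$ and $g$ modulo $p^s$ from the roots modulo $p$; \cref{lem:approx-eq} produces only a finite, computable list of coprime pairs $(A_1, A_2)$ governing the contradiction; and the passage from class~$\mathscr{C}$ to the regular case via \cref{lem:setup:regularise} introduces a rational factor $q(n)$ whose Weil height is $O(\log n)$ and hence is absorbed into the linear main term. Combining these steps yields the effective growth estimate and, through the reduction above, decidability of Membership for class~$\mathscr{C}$.

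The hard part will be the first step. Tracking constants through the spectral/exponential-sum machinery behind T\'oth's theorem is laborious and likely yields constants that, while computable, may be impractical. An alternative route---possibly more tractable---would be to bypass full equidistribution and instead prove directly, by sieve methods together with an effective lower bound for the least prime in an arithmetic progression (Linnik-type, in the form due to Xylouris), the weaker statement that is actually used in the proof of \cref{prop:quadratic}: the existence, in every sufficiently large dyadic window $[N,(1+\delta)N]$, of at least one prime $p$ satisfying \cref{it:61:B,it:61:A,it:61:C}. Such a qualitative existence, with an effective upper bound on the first such prime, would already suffice to produce an effective index $N$ beyond which $u_n \ne t$, and hence to settle the conjecture.
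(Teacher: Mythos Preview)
The statement you are addressing is a \emph{conjecture}: the paper does not prove it. Immediately before stating it, the paper explicitly flags this as a direction for future research, and observes that the only missing ingredient is an effective version of \cref{thm:toth-quant}. Your proposal is therefore not to be compared against a proof in the paper---there is none---but against the paper's own heuristic. On that count, your plan coincides exactly with what the paper suggests: make T\'oth's equidistribution result effective, substitute into the proof of \cref{prop:quadratic}, and read off an effective constant in $h_{\weil}(u_n)\gg n$.

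That said, your write-up remains a plan and not a proof. The decisive step---obtaining an effective \cref{thm:toth-quant}---is asserted as ``achievable in principle'' but not carried out, and this is precisely the open problem. One minor inaccuracy: Hensel lifting is not used in the proof of \cref{prop:quadratic} (it appears in \cref{lem:pasymmetric-divergence,lem:normal}); the proof of \cref{prop:quadratic} works directly with roots modulo~$p$.

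Your alternative route is a genuine addition beyond what the paper says and is worth highlighting. For the Membership Problem one does not need the full count of $\gg n/\log n$ primes; a single prime $p\asymp n/\delta$ satisfying \cref{it:61:B,it:61:A,it:61:C} already forces $\nu_p(u_n)\neq 0$, and once $p$ exceeds every prime dividing the numerator or denominator of the target~$t$ this gives $u_n\neq t$. Thus an effective existence result (one prime in each window, with a computable threshold) would suffice for the conjecture even though it would not recover the linear Weil-height growth of \cref{thm:main}. This weaker target may well be more accessible than a fully effective equidistribution theorem.
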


\section{Acknowledgements}
\paragraph{Funding.} 
Florian Luca acknowledges funding from the 2024 ERC Synergy Grant DynAMiCs. Jakub Konieczny, Andrew Scoones, and James Worrell were supported by the EPSRC Fellowship EP/X033813/1.
Mahsa Shirmohammadi acknowledges funding from the ANR grant VeSyAM (ANR-22-CE48-0005).


\printbibliography

\end{document}